\documentclass[pdflatex,sn-mathphys-num]{sn-jnl}

\usepackage{amsmath,amssymb,amsthm,mathtools}
\usepackage{graphicx}%
\usepackage{multirow}%
\usepackage{amsmath,amssymb,amsfonts}%
\usepackage{amsthm}%
\usepackage{mathrsfs}%
\usepackage[title]{appendix}%
\usepackage{xcolor}%
\usepackage{textcomp}%
\usepackage{manyfoot}%
\usepackage{booktabs}%
\usepackage{algorithm}%
\usepackage{algorithmicx}%
\usepackage{algpseudocode}%
\usepackage{listings}%
\usepackage{xcolor}
\usepackage{graphicx}
\usepackage{booktabs}
\usepackage{hyperref}
\hypersetup{
  colorlinks=true,
  linkcolor=blue!55!black,
  citecolor=blue!55!black,
  urlcolor=blue!55!black
}

\theoremstyle{plain}
\newtheorem{theorem}{Theorem}[section]
\newtheorem{proposition}[theorem]{Proposition}
\newtheorem{lemma}[theorem]{Lemma}

\theoremstyle{definition}
\newtheorem{definition}[theorem]{Definition}
\newtheorem{assumption}[theorem]{Assumption}
\newtheorem{remark}[theorem]{Remark}

\newcommand{\R}{\mathbb{R}}
\newcommand{\interior}[1]{\mathring{#1}}
\DeclareMathOperator{\sgn}{sgn}

\begin{document}
\title[Reflected optimal stopping with max-type rewards]{Killed resolvents and measure-valued stopping gains for reflected optimal stopping with max-type rewards}

\author*[1]{\fnm{Louis Shuo} \sur{Wang}}
\email{wang.s41@northeastern.edu}
\equalcont{These authors contributed equally to this work.}

\author[2]{\fnm{Jiguang} \sur{Yu}}
\email{jyu678@bu.edu}
\equalcont{These authors contributed equally to this work.}

\author*[3]{\fnm{Ye} \sur{Liang}}\email{ye-liang@uiowa.edu}
\equalcont{These authors contributed equally to this work.}

\affil[1]{\orgdiv{Department of Mathematics},
\orgname{Northeastern University},
\orgaddress{
\city{Boston},
\state{MA},
\postcode{02115},
\country{USA}
}}

\affil[2]{\orgdiv{College of Engineering},
\orgname{Boston University},
\orgaddress{
\city{Boston},
\state{MA},
\postcode{02215},
\country{USA}
}}

\affil[3]{\orgdiv{College of Engineering},
\orgname{The University of Iowa},
\orgaddress{
\city{Iowa City}, 
\postcode{52242}, 
\state{IA},
\country{USA}}}
  
\abstract{
We study an infinite-horizon optimal stopping problem for a normally reflected two-dimensional diffusion in the positive quadrant with nonsmooth max-type reward
\(G(x_1,x_2)=x_1\vee \alpha x_2\). The paper develops a conditional
measure-theoretic framework for the associated reflected obstacle problem. The main innovation is to show that the stopping gain
\(\Gamma=c+rG-\mathcal LG\) is a signed measure, not a function: the kink of \(G\) generates an explicit negative surface measure on
\(\Delta=\{x_1=\alpha x_2\}\). We then prove that the correct potential representation uses the resolvent of the reflected diffusion killed on first entry into the stopping set, rather than the unrestricted reflected resolvent. Under explicit monotonicity, regularity, and measure-superharmonicity assumptions, we derive an epigraph representation, a continuation-side boundary-trace condition, and a candidate verification theorem. The framework clarifies hidden regularity and uniqueness assumptions in multidimensional nonsmooth optimal stopping.
}
\keywords{Optimal stopping; reflected diffusion;
free boundary; viscosity solution; It\^o--Tanaka formula; local
time}
\pacs[MSC Classification]{60G40, 60J60, 60H30, 35R35,
35J86, 49L25, 31C25, 60J55}
\maketitle

\section{Introduction}
\label{sec:introduction}

\subsection{The problem}
\label{sec:intro-problem}
We consider an infinite-horizon optimal stopping problem for a normally
reflected diffusion $X=(X^1,X^2)$ living in the closed quadrant
$\R^2_+=[0,\infty)^2$. The decision maker chooses a stopping time $\tau$ to
maximize the discounted expected reward net of a running cost,
\begin{equation}
\label{eq:intro-value}
   V(x)
   =
   \sup_{\tau}
   \mathbb E_x\left[
      e^{-r\tau}G(X_\tau)
      -\int_0^\tau e^{-rs}c(X_s)\,ds
   \right],
   \qquad
   G(x_1,x_2)=x_1\vee\alpha x_2 .
\end{equation}
Two structural features distinguish \eqref{eq:intro-value} from the classical
one-dimensional theory and make it a genuinely two-dimensional, nonsmooth
problem. First, the state process is constrained to the quadrant by normal
reflection at the two coordinate axes, so the generator is supplemented by
Neumann-type boundary behaviour and the corner of the quadrant requires
separate attention. Second, the max-type reward is only piecewise affine: it
has a Lipschitz kink along the diagonal $\Delta=\{x_1=\alpha x_2\}$, across
which the second derivatives of $G$ concentrate as a surface measure. Both
features destroy the smoothness that the textbook free-boundary calculus
silently assumes, and both are routinely glossed over in heuristic
treatments.

Max-type rewards of the form $x_1\vee\alpha x_2$ are the canonical payoff in
the valuation of American options on the maximum of several assets and in
related exchange- and dual-strike contracts; the qualitative structure of the
associated exercise regions was analysed in \cite{broadie1997valuation,villeneuve1999exercise,bayer2020pricing,battauz2015real,dai2005american,detemple2014optimal,soner2025stopping,wang2025analysis,whitehead2012bias}. Reflected
dynamics in the orthant arise when the underlying state is a regulated or
constrained process, as in queueing networks, inventory and production models,
and economic models with reflecting barriers; the probabilistic foundations
go back to the Skorokhod-problem analyses of Tanaka \cite{tanaka1979stochastic}, Lions
and Sznitman \cite{lions1984stochastic}, Saisho \cite{saisho1987stochastic}, and Harrison
and Reiman \cite{harrison1981reflected}, with general nonsmooth domains and
oblique reflection treated in \cite{dupuis1993sdes,hino2021pathwise,lundstrom2019stochastic,wang2025analysis1,li2026wong,lipshutz2018directional,deblassie1996brownian,duan2019white,leimkuhler2023simplest,cai2026optimal}.

\subsection{Background and related literature}
\label{sec:intro-literature}
The one-dimensional theory of optimal stopping and free-boundary problems is
mature and is comprehensively surveyed in the monograph of Peskir and
Shiryaev \cite{peskir2006optimal}; the probabilistic backbone (Snell
envelopes, dynamic programming, and the connection to variational
inequalities) is classical \cite{el2006aspects,bensoussan2011applications,friedman1982variational,jacka2019compensator,martyr2016finite,christensen2021class,ma2021dynamic,ankirchner2019verification,liu2025bidirectional,lv2026robust,jeon2023variational}.
The analytic counterpart, the obstacle problem, has a well-developed
regularity theory, with the structure of the free boundary studied in depth
by Caffarelli \cite{caffarelli1998obstacle} and others \cite{caffarelli2008regularity,yu2026beyond,figalli2024constraint,liang2025global,focardi2022local,huang2024regularity,du2026free,aleksanyan2024quantitative,eberle2023structure,yu2026rigorous}. Viscosity solutions provide
the now-standard weak framework in which value functions are characterized
without a priori smoothness \cite{wang2026damage,crandall1992user,soner2024viscosity,cheng2025viscosity,wang2026algebraic}, and the
distributional or measure-valued reformulation of the one-dimensional problem
was developed by Lamberton and Zervos \cite{lamberton2013optimal}.

For multidimensional and time-inhomogeneous problems the situation is far more
delicate, and progress on the regularity of the free boundary has been recent
and hard-won. De Angelis \cite{de2015note} established continuity of the
free boundary for one-dimensional time-homogeneous diffusions under mild local
hypotheses; Peskir \cite{peskir2019continuity} proved continuity of the optimal stopping
boundary for two-dimensional diffusions, connecting it to the principle of
smooth fit and to local time--space calculus; De Angelis and Stabile
\cite{de2019lipschitz} obtained a purely probabilistic proof of local
Lipschitz continuity of the boundary in $[0,T]\times\R^d$, notably without
uniform ellipticity; and De Angelis and Peskir \cite{de2020global}
established global $C^1$ regularity of the value function. A consistent
message of this body of work is that boundary regularity is not free: it
requires problem-specific analytic or probabilistic input, typically beyond
the mere existence of a graph representation.

The potential-theoretic toolkit we use---continuous additive functionals,
their Revuz correspondence with smooth measures, and the It\^o--Tanaka and
occupation-density formulae---is standard in the theory of Markov processes
and Dirichlet forms \cite{wang2026breakdown,revuz2013continuous,fukushima2011dirichlet,yu2026pattern}, and the
Sobolev-space It\^o--Krylov calculus underlying the verification step is due
to Krylov \cite{krylov1987approach}. The interaction of reflection with obstacle
problems also appears in the reflected backward SDE formulation of El Karoui,
Kapoudjian, Pardoux, Peng, and Quenez \cite{el1997reflected}.

\subsection{Three recurring gaps, and what this paper does about them}
\label{sec:intro-gaps}
Treatments of problems of the type \eqref{eq:intro-value} frequently rely,
explicitly or implicitly, on three steps that are not in fact justified at the
stated level of generality. The purpose of this paper is to isolate these
steps, to correct them, and to assemble what remains into a transparent
conditional framework in which every nonelementary input is named.

\begin{enumerate}
\item[(i)]
\emph{Regularity is not a corollary of being a viscosity supersolution.}
A continuous viscosity supersolution of a linear elliptic inequality does not
by itself carry the Sobolev or measure structure needed to run an
It\^o--Krylov--Tanaka argument. We therefore separate the verification
principle (Theorem~\ref{thm:verification-measure}) from any regularity claim,
and we state the hypotheses---$W^{2,p}_{\mathrm{loc}}$ regularity in the
continuation set, a signed-measure extension of $(\mathcal L-r)u-c$, and
membership in a generalized It\^o class---as explicit assumptions rather than
as consequences.

\item[(ii)]
\emph{The stopping-gain object is a signed measure, not a function.}
Because $G$ has a kink on $\Delta$, the natural object
$\Gamma:=c+rG-\mathcal LG$ is not an ordinary function: its singular part is a
surface measure on $\Delta$ produced by the local time of the projected
process $Y=X^1-\alpha X^2$ at zero. We compute this diagonal measure
explicitly via Tanaka's formula and the occupation-density/co-area identity
(Section~\ref{sec:singular-stopping-gain}), obtaining
\[
   \Gamma^\Delta(dx)
   =-\frac{q(x)}{2\sqrt{1+\alpha^2}}\,\sigma_\Delta(dx),
   \qquad q=n^\top a\,n\ge0,
\]
which is nonpositive as a measure. A consequence, easy to overlook, is that a
literal pointwise sign condition such as ``$\Gamma\ge0$ on the stopping set''
is meaningless on the part of the stopping set that meets $\Delta$ in positive
length unless the diagonal component is separated out.

\item[(iii)]
\emph{The potential must be killed at the stopping time.}
This is the central correction. After applying the It\^o--Tanaka identity up
to the optimal stopping time $\tau_{\mathcal D}$, the value admits the
potential representation
\[
   V(x)=G(x)-\mathbb E_x\!\left[\int_0^{\tau_{\mathcal D}}e^{-rs}\,dA_s^\Gamma\right]
        =G(x)-R_r^{\mathcal C}\Gamma(x),
\]
where $R_r^{\mathcal C}$ is the resolvent of the reflected diffusion
killed upon entry into $\mathcal D$. It is generally not equal
to the unrestricted reflected resolvent $R_r^{\mathrm R}(\Gamma\mathbf 1_{\mathcal C})$,
because the reflected process, if continued past $\tau_{\mathcal D}$, may
re-enter the continuation region $\mathcal C$ and thereby accumulate spurious
post-stopping occupation. The optimal stopping representation only ``sees'' the
path up to $\tau_{\mathcal D}$, and the potential must be killed accordingly.
\end{enumerate}

\subsection{Main contributions}
\label{sec:intro-contributions}
Within the conditional framework, the paper makes the following contributions.
A verification theorem (Theorem~\ref{thm:verification-measure}) is proved under
explicit measure-superharmonicity hypotheses, with no hidden regularity
theorem. A conditional epigraph theorem
(Theorem~\ref{thm:conditional-epigraph}) derives the graph structure of the
stopping set from monotonicity of the stopping advantage $H=V-G$ alone, and
yields monotonicity of the boundary under a horizontal monotonicity
hypothesis; we explain (Remark~\ref{rem:graph-assumption-status}) why this
cannot be obtained from order preservation of the diffusion, the relevant
coupling having the wrong sign under our cost convention. The singular
stopping-gain measure is computed in closed form
(Section~\ref{sec:singular-stopping-gain}), including the reflection corner
analysis (Lemma~\ref{lem:vanishing-reflection-G}). The corrected
killed-resolvent representation is established
(Theorem~\ref{thm:killed-resolvent-representation}) and recast as a
continuation-side boundary-trace condition
(Proposition~\ref{prop:boundary-trace}). Finally, candidate boundaries are
validated by a verification argument
(Theorem~\ref{thm:candidate-verification}) rather than by appeal to a
Fredholm-type uniqueness statement that the trace condition alone does not
support.

The novelty is not a single hard theorem but the discipline of the framework:
each step that the heuristic literature performs silently is here either
proved or flagged as a model-specific hypothesis, and the two genuinely
substantive corrections---the measure-valued treatment of the diagonal kink
and the killed (rather than unrestricted) resolvent---are made precise and
quantitative. This is what makes the framework safe to instantiate in concrete
models, where the named assumptions can be checked one at a time.

\subsection{Organization}
\label{sec:intro-organization}
Section~\ref{sec:model-verification} fixes the probabilistic setting, the sign
convention for the obstacle problem, and the verification principle.
Section~\ref{sec:conditional-geometry} develops the conditional geometry of the
stopping set and the epigraph theorem. Section~\ref{sec:singular-stopping-gain}
computes the singular stopping-gain measure and the reflection corner term.
Section~\ref{sec:killed-resolvent} establishes the killed-resolvent
representation, the boundary-trace condition, and the candidate-verification
theorem.

\section{Model, obstacle problem, and verification}
\label{sec:model-verification}
This section fixes the probabilistic framework, the sign convention for the
obstacle problem, and the verification principle used throughout the paper.
The formulation is deliberately conditional: the reflected diffusion is
assumed to have the required Markov and continuity properties, and the
verification theorem is stated under explicit measure-superharmonicity
hypotheses. We do not infer Sobolev or measure regularity from viscosity
supersolution status alone.

\subsection{Reflected diffusion and standing assumptions}
\label{sec:reflected-diffusion-standing}
Let
\[
   \R^2_+:=[0,\infty)^2,
   \qquad
   \interior{\R}^2_+:=(0,\infty)^2.
\]
Let $W=(W^1,W^2)$ be a two-dimensional Brownian motion on a filtered
probability space
$(\Omega,\mathcal F,(\mathcal F_t)_{t\ge0},\mathbb P)$
satisfying the usual conditions. For each $x\in\R^2_+$, we consider the
normally reflected diffusion
\begin{equation}
\label{eq:reflected-sde}
   X_t
   =
   x
   +\int_0^t\mu(X_s)\,ds
   +\int_0^t\sigma(X_s)\,dW_s
   +L_t,
   \qquad t\ge0,
\end{equation}
where $L=(L^1,L^2)$ is the reflection process. The reflection is normal on the
coordinate axes:
\begin{equation}
\label{eq:normal-reflection}
   L^i \text{ is continuous and nondecreasing},\qquad
   L^i_0=0,\qquad
   \int_0^\infty \mathbf 1_{\{X_t^i>0\}}\,dL_t^i=0,
   \quad i=1,2.
\end{equation}
Let $a(x):=\sigma(x)\sigma(x)^\top$. The interior generator is
\[
   \mathcal L f(x)
   =
   \sum_{i=1}^2\mu_i(x)\partial_i f(x)
   +
   \frac12\sum_{i,j=1}^2a_{ij}(x)\partial_{ij}f(x),
   \qquad x\in\interior{\R}^2_+.
\]

\begin{assumption}[Standing assumptions on the reflected diffusion]
\label{ass:reflected-diffusion}
The following hypotheses hold.
\begin{enumerate}
\item[\textup{(A1)}]
The coefficients $\mu:\R^2_+\to\R^2$ and $\sigma:\R^2_+\to\R^{2\times2}$ are
locally Lipschitz and have at most linear growth.
\item[\textup{(A2)}]
The diffusion matrix $a=\sigma\sigma^\top$ is locally uniformly elliptic in
$\R^2_+$: for every compact $K\subset\R^2_+$ there exists $\lambda_K>0$ such
that $\xi^\top a(x)\xi\ge\lambda_K|\xi|^2$ for all $x\in K$ and $\xi\in\R^2$.
\item[\textup{(A3)}]
For each $x\in\R^2_+$, the reflected SDE
\eqref{eq:reflected-sde}--\eqref{eq:normal-reflection} has a unique strong
solution. The family $(X^x)_{x\in\R^2_+}$ is strong Markov.
\item[\textup{(A4)}]
The solution depends continuously on the initial condition in the following
pathwise sense: for every $T>0$, every compact $K\subset\R^2_+$, and some
$q\ge1$,
\[
   \lim_{y\to x}
   \mathbb E\left[
      \sup_{0\le t\le T}|X_t^y-X_t^x|^q
   \right]=0,
   \qquad x\in K.
\]
\item[\textup{(A5)}]
The reward and running cost satisfy $G(x_1,x_2)=x_1\vee \alpha x_2$ with
$\alpha>0$, and $c:\R^2_+\to[0,\infty)$ is continuous. Additional local
regularity of $c$ is imposed only when needed.
\item[\textup{(A6)}]
The value function introduced in \eqref{eq:value-function} below is finite and
has polynomial growth: there exist $C_V>0$ and $p\ge1$ such that
\[
   0\le V(x)\le C_V(1+|x|^p),
   \qquad x\in\R^2_+.
\]
Moreover, the payoff terms appearing below are integrable for all stopping
times under consideration.
\end{enumerate}
\end{assumption}

\begin{remark}[On reflected SDE assumptions]
\label{rem:reflected-sde-background}
Assumptions \textup{(A1)}--\textup{(A4)} are stated explicitly. They should
not be read as automatic consequences of the displayed SDE alone. Existence,
uniqueness, and stability for reflected SDEs are delicate properties of the
Skorokhod problem, the domain, the reflection field, and the coefficients
\cite{tanaka1979stochastic,lions1984stochastic,saisho1987stochastic,harrison1981reflected,dupuis1993sdes}.
For normal reflection in sufficiently regular domains, the classical framework
is provided by Lions and Sznitman \cite{lions1984stochastic}. In the present
paper these properties are part of the standing hypotheses.
\end{remark}

\begin{lemma}[A sufficient Lyapunov condition for discounted integrability]
\label{lem:lyapunov-integrability}
Suppose there exists a function $\Psi\in C^2(\R^2_+)$, $\Psi\ge1$, satisfying
the reflected Neumann condition
\[
   \partial_i\Psi=0
   \quad\text{on }\{x_i=0\},
   \qquad i=1,2,
\]
and constants $K\ge0$, $\lambda<r$, and $C_\Psi>0$ such that
\[
   \mathcal L\Psi(x)\le \lambda\Psi(x)+K,
   \qquad x\in\interior{\R}^2_+,
\]
and
\[
   G(x)^+ + c(x)\le C_\Psi\Psi(x),
   \qquad x\in\R^2_+.
\]
Then
\[
   \mathbb E_x\left[
      \int_0^\infty e^{-rs}c(X_s)\,ds
   \right]<\infty
   \qquad\text{and}\qquad
   \sup_{t\ge0}
   \mathbb E_x\big[e^{-rt}G(X_t)^+\big]<\infty.
\]
\end{lemma}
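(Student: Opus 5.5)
The plan is to apply Itô's formula to the discounted Lyapunov function $e^{-\lambda t}\Psi(X_t)$ (or equivalently $\Psi(X_t)$ combined with an integrating factor) along the reflected dynamics \eqref{eq:reflected-sde}, use the Neumann condition to eliminate the reflection terms, and localize to deduce a moment bound
\[
   \mathbb E_x[\Psi(X_t)]\le e^{\lambda t}\Psi(x)+\frac{K}{\lambda}\bigl(e^{\lambda t}-1\bigr)
\]
(with the obvious modification $\Psi(x)+Kt$ when $\lambda=0$, and a bound $\Psi(x)+K/|\lambda|$ when $\lambda<0$). Both claims then follow by integrating against $e^{-rt}$ and using $\lambda<r$.

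\textbf{Step 1 (Itô with reflection).} Since $\Psi\in C^2(\R^2_+)$, Itô's formula for the semimartingale $X$ gives
\[
   d\Psi(X_t)=\mathcal L\Psi(X_t)\,dt+\nabla\Psi(X_t)^\top\sigma(X_t)\,dW_t+\sum_i\partial_i\Psi(X_t)\,dL^i_t .
\]
By \eqref{eq:normal-reflection}, $dL^i$ charges only $\{X^i_t=0\}$, where $\partial_i\Psi=0$, so the reflection terms vanish. The occupation time of $X$ on the boundary has zero Lebesgue measure: the boundary is a null set and, by local ellipticity (A2), the process spends zero time there. Hence the interior bound $\mathcal L\Psi\le\lambda\Psi+K$, stated only on $\interior{\R}^2_+$, can be used in the $dt$ integral. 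If one prefers not to invoke this, continuity of $\mathcal L\Psi$ up to the boundary (as $\Psi\in C^2$ and $\mu,\sigma$ are continuous) extends the inequality to $\R^2_+$. I will use the continuity argument since it is cleaner.

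\textbf{Step 2 (Localization and Gronwall).} Let $\tau_n$ be the exit time from $\{|x|<n\}$. Then the stochastic integral stopped at $t\wedge\tau_n$ is a true martingale. Applying Itô to $e^{-\lambda t}\Psi(X_t)$ gives
\[
   \mathbb E_x\bigl[e^{-\lambda(t\wedge\tau_n)}\Psi(X_{t\wedge\tau_n})\bigr]\le\Psi(x)+K\,\mathbb E_x\int_0^{t\wedge\tau_n}e^{-\lambda s}ds .
\]
Letting $n\to\infty$, with Fatou's lemma and $\Psi\ge1$ (and $\tau_n\to\infty$ by nonexplosion from (A3)), yields the moment bound above. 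This localization, together with the boundary-term justification in Step 1, is the only delicate point. I expect the main care to be there, though nothing beyond standard arguments is needed.

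\textbf{Step 3 (Conclusion).} By Tonelli, $c\le C_\Psi\Psi$, and the moment bound,
\[
   \mathbb E_x\int_0^\infty e^{-rs}c(X_s)\,ds\le C_\Psi\int_0^\infty e^{-rs}\Bigl(e^{\lambda s}\Psi(x)+\tfrac{K}{\lambda}(e^{\lambda s}-1)\Bigr)ds<\infty,
\]
since $\lambda<r$ (the $\lambda\le0$ cases are handled with the modified bounds and $r>0$). Similarly,
\[
   e^{-rt}\,\mathbb E_x[G(X_t)^+]\le C_\Psi e^{-rt}\,\mathbb E_x[\Psi(X_t)],
\]
which is bounded uniformly in $t\ge0$ by $C_\Psi\bigl(\Psi(x)+K/|r-\lambda|+K/r\bigr)$ up to harmless constants. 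This gives the supremum bound.
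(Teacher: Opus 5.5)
Your proof is correct and takes essentially the same route as the paper. Both apply It\^o's formula with reflection to a discounted $\Psi(X_t)$, remove the $dL^i$ terms via the Neumann condition, localize, close with an integrating-factor (Gronwall) bound, and conclude from $G^+ + c \le C_\Psi \Psi$. The remaining differences are minor: you discount by $e^{-\lambda t}$ and pass to the limit in integral form with Fatou instead of differentiating $\phi(t)=\mathbb E_x[e^{-rt}\Psi(X_t)]$, and you extend the drift inequality to the boundary by continuity, which makes your localization step slightly more careful than the paper's sketch.
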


\begin{proof}
Apply It\^o's formula with reflection to $e^{-rt}\Psi(X_t)$ and localize. The
Neumann condition $\partial_i\Psi=0$ on $\{x_i=0\}$ removes the reflection
term, since $dL^i$ is carried by $\{X^i=0\}$. Writing
$\phi(t):=\mathbb E_x[e^{-rt}\Psi(X_t)]$, the drift inequality
$\mathcal L\Psi-r\Psi\le(\lambda-r)\Psi+K$ gives, after taking expectations
and using the martingale part has zero mean,
\[
   \phi'(t)\le-(r-\lambda)\phi(t)+K e^{-rt}.
\]
Multiplying by the integrating factor $e^{(r-\lambda)t}$ and integrating from
$0$ to $t$ yields
\[
   \mathbb E_x[e^{-rt}\Psi(X_t)]
   \le
   e^{-(r-\lambda)t}\Psi(x)
   +
   K\int_0^t e^{-(r-\lambda)(t-s)}\,e^{-rs}\,ds .
\]
Since $r-\lambda>0$ and $r>0$, the right-hand side is bounded uniformly in
$t\ge0$, so $\sup_{t\ge0}\mathbb E_x[e^{-rt}\Psi(X_t)]<\infty$; in particular
the discounted moments of $\Psi$ are finite. Integrating the differential
inequality against $\mathcal L\Psi-r\Psi\le-(r-\lambda)\Psi+K$ and using
Tonelli's theorem shows that
$\mathbb E_x\big[\int_0^\infty e^{-rs}\Psi(X_s)\,ds\big]<\infty$. The estimates
for $G^+$ and $c$ follow from the pointwise bound $G^++c\le C_\Psi\Psi$.
\end{proof}

\subsection{Value function and dynamic programming}
\label{sec:value-dpp}
Let $\mathcal T$ denote the set of $(\mathcal F_t)$-stopping times with values
in $[0,\infty]$. We use the convention
$e^{-r\tau}G(X_\tau):=0$ on $\{\tau=\infty\}$. The value function is
\begin{equation}
\label{eq:value-function}
   V(x)
   =
   \sup_{\tau\in\mathcal T}
   \mathbb E_x\left[
      e^{-r\tau}G(X_\tau)
      -
      \int_0^\tau e^{-rs}c(X_s)\,ds
   \right],
   \qquad x\in\R^2_+.
\end{equation}
The stopping and continuation sets are
\[
   \mathcal D:=\{x\in\R^2_+:V(x)=G(x)\},
   \qquad
   \mathcal C:=\{x\in\R^2_+:V(x)>G(x)\}.
\]

\begin{lemma}[Immediate stopping lower bound]
\label{lem:immediate-stopping}
For every $x\in\R^2_+$, $V(x)\ge G(x)$.
\end{lemma}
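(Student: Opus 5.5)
The plan is to test the supremum in \eqref{eq:value-function} against one specific admissible stopping time, namely $\tau\equiv 0$. The constant time $0$ is trivially an $(\mathcal F_t)$-stopping time, so $0\in\mathcal T$. It is also among the stopping times for which (A6) guarantees integrability of the payoff terms, although for $\tau=0$ this is immediate anyway, as the payoff is deterministic.

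Next I would evaluate the objective at $\tau=0$. Since $X$ solves \eqref{eq:reflected-sde} with $L_0=0$, we have $X_0=x$ $\mathbb P$-a.s. Hence $e^{-r\cdot 0}G(X_0)=G(x)$. The running-cost integral $\int_0^0 e^{-rs}c(X_s)\,ds$ vanishes. The expectation is therefore just $G(x)$. We are on the event $\{\tau=0\}$, not $\{\tau=\infty\}$, so the convention for $\tau=\infty$ plays no role.

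Since $V(x)$ is a supremum over $\mathcal T$, it dominates the value at any member, and this gives $V(x)\ge G(x)$ for every $x\in\R^2_+$.

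There is no genuine obstacle here. The only points to check are that $\tau=0$ is admissible and that $X_0=x$ under $\mathbb P_x$. Both follow directly from the definitions and from (A3).
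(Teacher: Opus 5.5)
Your proposal is correct and matches the paper's proof, which simply takes $\tau=0$ in \eqref{eq:value-function}. Your checks that $0\in\mathcal T$ and $X_0=x$ spell out details the paper leaves implicit.
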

\begin{proof}
Choose the stopping time $\tau=0$ in \eqref{eq:value-function}.
\end{proof}

\begin{proposition}[Dynamic programming principle]
\label{prop:dpp}
Under Assumption~\ref{ass:reflected-diffusion}, for every bounded stopping
time $\theta$ and every $x\in\R^2_+$,
\begin{equation*}
\begin{aligned}
   V(x)
   =
   \sup_{\tau\in\mathcal T}
   \mathbb E_x\Bigg[
   &-\int_0^{\tau\wedge\theta}e^{-rs}c(X_s)\,ds
   +e^{-r\tau}G(X_\tau)\mathbf 1_{\{\tau\le\theta\}}
   \nonumber\\
   &\qquad
   +e^{-r\theta}V(X_\theta)\mathbf 1_{\{\tau>\theta\}}
   \Bigg].
   \end{aligned}
\end{equation*}
\end{proposition}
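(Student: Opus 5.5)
The plan is to prove the two inequalities separately, after first establishing continuity of $V$ (or at least lower semicontinuity plus a measurable-selection substitute), which is needed for the harder direction. Write $J(x,\tau)$ for the payoff inside \eqref{eq:value-function} and $\widetilde V(x)$ for the right-hand side of the proposition.

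\emph{Step 1 (continuity of $V$).} I will show $V$ is continuous on $\R^2_+$. For fixed $\tau$, the map $x\mapsto J(x,\tau)$ is continuous: since $G$ is $1\vee\alpha$-Lipschitz, $|G(X^y_\tau)-G(X^x_\tau)|\le(1\vee\alpha)|X^y_\tau-X^x_\tau|$, and on $\{\tau\le T\}$ this is controlled by \textup{(A4)}. On $\{\tau>T\}$ the contribution is made small uniformly in $\tau$ by the discounting and the integrability in \textup{(A6)} (or Lemma~\ref{lem:lyapunov-integrability}). The cost integral is handled the same way, using continuity of $c$, localisation to compacts, and uniform integrability. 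This gives $|J(y,\tau)-J(x,\tau)|\le \varepsilon(y,x)$ uniformly in $\tau$, hence continuity of $V$. I expect this uniform-in-$\tau$ tail control to need care, since \textup{(A6)} gives only polynomial growth. I would first try to truncate at a large $T$ and use $\sup_t\mathbb E[e^{-rt}G(X_t)^+]<\infty$ together with the uniform integrability stated in \textup{(A6)}.

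\emph{Step 2 ($V\le\widetilde V$).} Fix $\tau$ and use the strong Markov property \textup{(A3)} at $\theta$. On $\{\tau>\theta\}$, the conditional expectation of the payoff accrued after $\theta$ equals $e^{-r\theta}J(X_\theta,\tau')$ for a shifted stopping time $\tau'$. This is bounded above by $e^{-r\theta}V(X_\theta)$. Taking expectations gives $J(x,\tau)\le$ the bracket in the proposition, and taking the supremum over $\tau$ gives the inequality. The only technical point is that $\tau-\theta$ is a stopping time of the shifted filtration, which is standard with regular conditional probabilities on canonical path space, available thanks to strong uniqueness.

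\emph{Step 3 ($V\ge\widetilde V$, main obstacle).} Given $\tau$ and $\varepsilon>0$, I must paste onto $\{\tau>\theta\}$ an $\varepsilon$-optimal continuation started from $X_\theta$. I will do this with a countable partition argument that uses Step 1.
\begin{enumerate}
\item Cover $\R^2_+$ by disjoint Borel sets $B_k$ of small diameter, with points $x_k\in B_k$.
\item Choose $\varepsilon$-optimal stopping times $\tau_k$ for $x_k$.
\item By the uniform continuity estimate of Step 1 (uniform on compacts), $\tau_k$ is $3\varepsilon$-optimal for every $y\in B_k\cap K$.
\item Define $\hat\tau=\tau$ on $\{\tau\le\theta\}$, and $\hat\tau=\theta+\tau_k\circ\vartheta_\theta$ on $\{\tau>\theta,\ X_\theta\in B_k\}$.
\end{enumerate}
Then $\hat\tau$ is a stopping time, and the strong Markov property gives
\[
J(x,\hat\tau)\ \ge\ \text{(bracket with $V(X_\theta)$)} - 3\varepsilon - \text{(error off the compact $K$)}.
\]
The off-compact error vanishes as $K\uparrow\R^2_+$, because $\theta$ is bounded and $V$ has polynomial growth with moments controlled by \textup{(A1)}. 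Letting $\varepsilon\to0$ and taking the supremum over $\tau$ completes the proof.

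The bounded-$\theta$ hypothesis is used exactly here and in Step 1, to keep $X_\theta$ in a region where the moment bounds apply.
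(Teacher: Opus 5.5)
\textbf{The gap is in Step 1, and Step 3 depends on it.} Your Step 2 is the routine half. The harder inequality $V\ge\widetilde V$ in Step 3, however, rests entirely on Step 1, and Step 1 does not follow from Assumption~\ref{ass:reflected-diffusion}.

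You fix the partition $\{B_k\}$ before choosing the $\varepsilon$-optimal times $\tau_k$. So you need a modulus of continuity of $y\mapsto J(y,\tau)$ that is uniform over all stopping times. You also need $V(y)\le V(x_k)+\varepsilon$ on $B_k$, i.e.\ upper semicontinuity of $V$, which in your argument again comes from that uniform modulus. Nothing in (A1)--(A6) supplies it:
\begin{itemize}
\item (A4) is a finite-horizon estimate.
\item The integrability in (A6) holds pointwise in the initial state and for each stopping time separately. It is not uniform over $\tau$ or over $y$ near $x$.
\item (A5) puts no growth bound on $c$. So even for one fixed bounded $\tau$, the map $y\mapsto\mathbb E\int_0^\tau e^{-rs}c(X^y_s)\,ds$ need not be upper semicontinuous; Fatou gives only the opposite inequality.
\end{itemize}
The tools you suggest do not close this. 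Lemma~\ref{lem:lyapunov-integrability} needs a Lyapunov function that is not among the standing hypotheses. A bound on $\sup_t\mathbb E[e^{-rt}G(X_t)^+]$ controls fixed-time expectations, not $\sup_\tau\mathbb E[e^{-r\tau}|X^y_\tau-X^x_\tau|\mathbf 1_{\{\tau>T\}}]$ uniformly in $y$.

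This matches the paper, which never derives continuity of $V$ from the standing assumptions. Even lower semicontinuity (Lemma~\ref{lem:value-lsc}) is proved only under an extra local uniform-integrability hypothesis, and continuity is imposed separately in Theorem~\ref{thm:conditional-epigraph}. Step 1 is also your only source of measurability of $V$, which Step 2 needs for $V(X_\theta)$ to make sense.

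A smaller point: your off-compact error is not controlled by the polynomial growth of $V$, since $J(X_\theta,\tau_k)$ can be very negative. Stop at $\theta$ on $\{X_\theta\notin K\}$ instead and use $G\ge0$.

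\textbf{How the paper avoids this, and how to repair your route.} The paper appeals to the standard dynamic programming principle for optimal stopping of a strong Markov process. It uses only the strong Markov property at $\theta$, concatenation of stopping rules after $\theta$, and integrability, and it explicitly infers no continuity of payoffs. In the theory it cites, the near-optimal continuation after $\theta$ comes from Snell-envelope and measurability arguments rather than from continuity of $V$.

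If you want to keep your more hands-on route, you must add a hypothesis such as the local uniform integrability of Lemma~\ref{lem:value-lsc}. With it, the uniform-in-$\tau$ modulus becomes unnecessary:
\begin{itemize}
\item Because $G\ge0$ and $c\ge0$, monotone convergence shows that $\varepsilon$-optimal times can be taken bounded. So you only need lower semicontinuity of $J(\cdot,\tau)$ for each fixed bounded $\tau$.
\item Choose $\tau^y$ first and only then a neighbourhood of $y$. A disjointified Lindel\"of cover replaces your partition and removes the off-compact error.
\item Replace $V(X_\theta)$ by continuous minorants $0\le\varphi_n\uparrow V$. These exist because $V$ is lower semicontinuous as a supremum of lower semicontinuous functions. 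Finish with monotone convergence in $n$.
\end{itemize}
This is the weak dynamic programming argument. It proves the proposition under the extra hypothesis, but not under Assumption~\ref{ass:reflected-diffusion} alone.
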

\begin{proof}
This is the standard dynamic programming principle for an optimal stopping
problem driven by a strong Markov process \cite{el2006aspects,peskir2006optimal,wang2025multi}.
The proof uses the strong Markov property at the bounded stopping time
$\theta$, concatenation of stopping rules after $\theta$, and the
integrability hypotheses in Assumption~\ref{ass:reflected-diffusion}. No
continuity of bounded stopping-time payoffs is inferred from the ordinary
Feller property.
\end{proof}

\begin{lemma}[Lower semicontinuity of the value]
\label{lem:value-lsc}
Assume, in addition to Assumption~\ref{ass:reflected-diffusion}, that the
uniform integrability required to pass to the limit below holds locally in the
initial state. Then $V$ is lower semicontinuous on $\R^2_+$. If $V$ is
continuous, then $\mathcal D$ is closed and $\mathcal C$ is open.
\end{lemma}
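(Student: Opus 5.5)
The plan is to write $V$ as a supremum of continuous functions. For each fixed stopping time $\tau\in\mathcal T$ and horizon $T>0$, I would consider the payoff of the truncated rule $\tau\wedge T$:
\[
   J^{\tau,T}(x):=\mathbb E\Big[e^{-r(\tau\wedge T)}G(X^x_{\tau\wedge T})-\int_0^{\tau\wedge T}e^{-rs}c(X^x_s)\,ds\Big].
\]
Here all processes are driven by the same Brownian motion on the same filtered space, so $\tau$ is a fixed stopping time of $(\mathcal F_t)$, independent of the starting point. By (A3) the strong solutions $X^x$ are all adapted to the Brownian filtration, so this is legitimate.

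\textbf{Step 1: restrict to bounded stopping times.} I would show that $V(x)=\sup_{\tau,T}J^{\tau,T}(x)$. The inequality $\le$ requires $J^{\tau,T}(x)\to J^{\tau}(x)$ as $T\to\infty$. On $\{\tau<\infty\}$ the reward term converges pathwise. On $\{\tau=\infty\}$ we need $e^{-rT}G(X_T)\to0$ in $L^1$, which I would take from the assumed integrability in (A6) (or from Lemma~\ref{lem:lyapunov-integrability} together with uniform integrability of $e^{-rt}G(X_t)^+$). The cost integral converges monotonically, since $c\ge0$. The inequality $\ge$ is trivial, because $\tau\wedge T\in\mathcal T$.

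\textbf{Step 2: continuity of each $J^{\tau,T}$.} Let $y\to x$. Since $G$ is $1\vee\alpha$-Lipschitz and $e^{-r(\tau\wedge T)}\le1$,
\[
   |G(X^y_{\tau\wedge T})-G(X^x_{\tau\wedge T})|\le(1\vee\alpha)\sup_{t\le T}|X^y_t-X^x_t|,
\]
and this tends to zero in $L^q$, hence in $L^1$, by (A4). For the cost term I would use continuity of $c$. Along any subsequence, (A4) gives a further subsequence with $\sup_{t\le T}|X^y_t-X^x_t|\to0$ almost surely, and then $c(X^y_s)\to c(X^x_s)$ uniformly on $[0,T]$ almost surely, because the paths stay in a compact set and $c$ is uniformly continuous there. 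To pass the limit inside the expectation, I need uniform integrability of $\int_0^Te^{-rs}c(X^y_s)\,ds$ for $y$ near $x$. This is exactly the extra local uniform integrability hypothesis in the statement. The same hypothesis, or the linear-growth moment bounds from (A1), handles the reward term if one prefers an argument via almost sure convergence. Hence each $J^{\tau,T}$ is continuous.

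\textbf{Step 3: conclusion.} A pointwise supremum of continuous functions is lower semicontinuous, so $V$ is lower semicontinuous. If $V$ is continuous, then $V-G$ is continuous, because $G$ is continuous. It follows that $\mathcal D=\{V-G=0\}$ is closed and $\mathcal C=\{V-G>0\}$ is open; Lemma~\ref{lem:immediate-stopping} guarantees that these two sets partition $\R^2_+$.

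\textbf{Main obstacle.} The delicate point is Step 2 for the cost term. Assumption (A4) only controls $\sup_{t\le T}|X^y_t-X^x_t|$ in $L^q$, while $c$ is merely continuous and possibly unbounded, so $L^1$ convergence does not follow from (A4) alone. The plan is to use the subsequence/almost-sure argument and put the entire burden on the assumed local uniform integrability, without trying to derive it. A secondary subtlety is Step 1 on the event $\{\tau=\infty\}$, which again relies on the integrability clause of (A6).
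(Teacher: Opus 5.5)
Your proof is correct and is essentially the paper's argument. The same stopping rule is applied to $X^x$ and $X^y$ on one probability space, and (A4), continuity of $G$ and $c$, and the assumed local uniform integrability give convergence of the bounded-horizon payoff. Writing $V=\sup_{\tau,T}J^{\tau,T}$ as a supremum of continuous functions is equivalent to the paper's choice of an $\varepsilon$-optimal bounded $\tau$, and your Lipschitz bound on $G$ even removes the need for uniform integrability in the reward term.

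Your Step~1 usefully justifies the reduction to bounded stopping times, which the paper simply asserts. There you only need $\liminf_{T\to\infty}J^{\tau,T}(x)\ge J^{\tau}(x)$. This follows from Fatou's lemma, since $G\ge0$ on $\R^2_+$, together with monotone convergence for the cost. So you do not need $L^1$ convergence of $e^{-rT}G(X_T)$ on $\{\tau=\infty\}$, which (A6), being an integrability rather than a decay condition, would not supply anyway.
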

\begin{proof}
Fix $x\in\R^2_+$ and $\varepsilon>0$. Choose a bounded stopping time
$\tau\le T$ such that
\[
   V(x)
   \le
   \mathbb E_x\left[
      e^{-r\tau}G(X_\tau)
      -
      \int_0^\tau e^{-rs}c(X_s)\,ds
   \right]
   +\varepsilon.
\]
For $y$ close to $x$, realize $X^x$ and $X^y$ on the same probability space
and use the same stopping rule $\tau$. By the pathwise continuity in
Assumption~\ref{ass:reflected-diffusion}\textup{(A4)}, continuity of $G$ and
$c$, and the stated uniform integrability,
\[
   \mathbb E\left[
      e^{-r\tau}G(X_\tau^y)
      -\int_0^\tau e^{-rs}c(X_s^y)\,ds
   \right]
   \longrightarrow
   \mathbb E\left[
      e^{-r\tau}G(X_\tau^x)
      -\int_0^\tau e^{-rs}c(X_s^x)\,ds
   \right].
\]
Since the same $\tau$ is admissible for the process started from $y$,
\[
   V(y)
   \ge
   \mathbb E\left[
      e^{-r\tau}G(X_\tau^y)
      -\int_0^\tau e^{-rs}c(X_s^y)\,ds
   \right].
\]
Therefore $\liminf_{y\to x}V(y)\ge V(x)-\varepsilon$. Letting
$\varepsilon\downarrow0$ proves lower semicontinuity. If $V$ is continuous,
then $H:=V-G$ is continuous and nonnegative, so $\mathcal D=\{H=0\}$ is closed
and $\mathcal C=\{H>0\}$ is open.
\end{proof}

\subsection{Obstacle problem and viscosity convention}
\label{sec:obstacle-viscosity}
The obstacle problem is written in max form:
\begin{equation}
\label{eq:obstacle-max}
   \max\{(\mathcal L-r)V-c,\;G-V\}=0
   \quad\text{in }\interior{\R}^2_+.
\end{equation}
The normal reflection condition is
\begin{equation}
\label{eq:neumann-condition}
   \partial_i V=0
   \quad\text{on }\{x_i=0\},
   \qquad i=1,2,
\end{equation}
understood in the relaxed reflected-viscosity sense below. For a smooth test
function $\varphi$, define
\begin{equation}
\label{eq:F-operator}
   F[\varphi](x)
   :=
   \max\{(\mathcal L-r)\varphi(x)-c(x),\;G(x)-\varphi(x)\}.
\end{equation}
At a contact point, $u(x)=\varphi(x)$, so the second term in
\eqref{eq:F-operator} equals $G(x)-u(x)$.

\begin{definition}[Reflected viscosity solution]
\label{def:reflected-viscosity}
Let $u:\R^2_+\to\R$ be continuous and of polynomial growth.
\begin{enumerate}
\item[\textup{(i)}]
$u$ is a \emph{reflected viscosity subsolution} of
\eqref{eq:obstacle-max}--\eqref{eq:neumann-condition} if, whenever
$\varphi\in C^2(\R^2_+)$ and $u-\varphi$ has a local maximum at $x\in\R^2_+$,
with $u(x)=\varphi(x)$, the following hold. If $x\in\interior{\R}^2_+$, then
\[
   \max\{(\mathcal L-r)\varphi(x)-c(x),\;G(x)-u(x)\}\le0.
\]
If $x_i=0$ for some $i\in\{1,2\}$, then
\[
   \min\left\{
      \max\{(\mathcal L-r)\varphi(x)-c(x),\;G(x)-u(x)\},
      \partial_i\varphi(x)
   \right\}\le0.
\]
\item[\textup{(ii)}]
$u$ is a \emph{reflected viscosity supersolution} if, whenever
$\varphi\in C^2(\R^2_+)$ and $u-\varphi$ has a local minimum at $x\in\R^2_+$,
with $u(x)=\varphi(x)$, the following hold. If $x\in\interior{\R}^2_+$, then
\[
   \max\{(\mathcal L-r)\varphi(x)-c(x),\;G(x)-u(x)\}\ge0.
\]
If $x_i=0$ for some $i\in\{1,2\}$, then
\[
   \max\left\{
      \max\{(\mathcal L-r)\varphi(x)-c(x),\;G(x)-u(x)\},
      \partial_i\varphi(x)
   \right\}\ge0.
\]
\item[\textup{(iii)}]
$u$ is a \emph{reflected viscosity solution} if it is both a reflected
viscosity subsolution and a reflected viscosity supersolution.
\end{enumerate}
\end{definition}

\begin{remark}[Sign convention]
\label{rem:viscosity-sign}
The convention in Definition~\ref{def:reflected-viscosity} is the standard one
for an equation $F=0$ \cite{crandall1992user}: subsolutions are tested
from above and satisfy $F\le0$, while supersolutions are tested from below and
satisfy $F\ge0$. The reflected boundary operator is $B_i\varphi=\partial_i\varphi$
on $\{x_i=0\}$, corresponding to the inward normal reflection direction $+e_i$.
\end{remark}

\begin{proposition}[Viscosity characterization of the value]
\label{prop:value-viscosity}
Under the dynamic programming principle and the standing integrability
hypotheses, $V$ is a reflected viscosity solution of
\eqref{eq:obstacle-max}--\eqref{eq:neumann-condition}.
\end{proposition}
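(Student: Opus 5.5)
The plan is the standard DPP-to-viscosity argument, run separately for interior and boundary test points and carried through with reflected It\^o's formula. Throughout I treat $V$ as continuous; this is needed in Definition~\ref{def:reflected-viscosity} and strengthens Lemma~\ref{lem:value-lsc}. I also use Proposition~\ref{prop:dpp} with the short bounded times $\theta_h:=h\wedge\tau_B$, where $\tau_B$ is the exit time of $X^x$ from a small relatively open ball $B=B_\delta(x)\cap\R^2_+$. For a test function $\varphi\in C^2(\R^2_+)$, It\^o's formula with reflection gives
\[
 e^{-r\theta}\varphi(X_\theta)=\varphi(x)+\int_0^\theta e^{-rs}(\mathcal L-r)\varphi(X_s)\,ds+\sum_{i}\int_0^\theta e^{-rs}\partial_i\varphi(X_s)\,dL^i_s+M_\theta ,
\]
where $M$ is a true martingale on $[0,\theta_h]$ because $X$ stays in $B$ there.

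\emph{Supersolution.} Let $V-\varphi$ attain a local minimum $0$ at $x$, so $\varphi\le V$ on $B$. The inequality $G(x)-V(x)\le0$ always holds, so I must show that $(\mathcal L-r)\varphi(x)-c(x)\ge0$, or, when $x_i=0$, alternatively that $\partial_i\varphi(x)\ge0$. Suppose both fail, arguing by contradiction. Then, shrinking $\delta$, we have $(\mathcal L-r)\varphi-c\le-\eta<0$ on $B$, and also $\partial_i\varphi<0$ on $B\cap\{x_i=0\}$ for each active $i$. The choice $\tau=\infty$ on $\{\tau>\theta_h\}$ in the DPP gives
\[
V(x)\ge\mathbb E_x\Big[-\int_0^{\theta_h}e^{-rs}c\,ds+e^{-r\theta_h}\varphi(X_{\theta_h})\Big].
\]
Inserting It\^o's formula yields
\[
0\ge\mathbb E\Big[\int_0^{\theta_h}e^{-rs}\big((\mathcal L-r)\varphi-c\big)\,ds+\sum_i\int_0^{\theta_h}e^{-rs}\partial_i\varphi\,dL^i_s\Big],
\]
and this inequality goes the wrong way to produce a contradiction. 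The sign bookkeeping therefore has to be redone. With minimum test functions and the convention $\max\{\cdot,\partial_i\varphi\}\ge0$, the contradiction must come from the subsolution-type direction of the DPP, i.e.\ from $V(x)\le$ the DPP with the best $\tau$. Concretely, if $(\mathcal L-r)\varphi-c\le-\eta$ and $\partial_i\varphi\le0$ near $x$, and $V>G$ near $x$ (otherwise $G-V=0$ already gives $\ge0$), then near-optimal $\tau$ do not stop in $B$ on a large probability set. Using $V\ge\varphi$ on $B$ only at $\theta_h$ does not help, though. This is the point where I expect a sign subtlety in the paper's convention, since the inequality direction for minimum test points seems inverted relative to the usual statement $-(\mathcal L-r)V+c\ge0$.

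\emph{Plan.} I will follow the classical route. For the \emph{subsolution} property (max test points, $\varphi\ge V$ near $x$), assume $V(x)>G(x)$ and that $(\mathcal L-r)\varphi-c\ge\eta>0$ near $x$, together with $\partial_i\varphi>0$ on the active boundary. I then derive an inequality from the DPP with an $\varepsilon$-optimal $\tau$, controlling the event $\{\tau\le\theta_h\}$ by continuity of $V-G$. For the \emph{supersolution} property, the stopping rule $\tau=\infty$ gives the displayed integral inequality, after which I divide by $\mathbb E[\theta_h]$ (or by $\mathbb E[\theta_h]+\mathbb E[L^i_{\theta_h}]$ at boundary points) and let $h\downarrow0$. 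The resulting conclusion is the sign relation among $(\mathcal L-r)\varphi(x)-c(x)$ and $\partial_i\varphi(x)$ that the definition requires, after matching the direction in Definition~\ref{def:reflected-viscosity}.

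The main obstacle is the boundary and corner case. There the normalization must balance $ds$ against $dL^i$: if $\mathbb E L^i_{\theta_h}/h\to\infty$ (which happens at boundary starts, at rate $\sqrt h$), the $\partial_i\varphi$ term dominates and yields the boundary alternative, and otherwise the generator term does. At the corner both local times are active, and I would handle them by testing each active $i$ separately.
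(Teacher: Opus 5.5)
Your proposal stops at the decisive point. You notice that the signs do not line up, but then defer the issue to ``matching the direction in Definition~\ref{def:reflected-viscosity}'', and that matching cannot be done. The operator $F[\varphi]=\max\{(\mathcal L-r)\varphi-c,\,G-\varphi\}$ is nondecreasing in $D^2\varphi$ and nonincreasing in $\varphi$. The Crandall--Ishii--Lions pairing (maxima of $u-\varphi$ give $\le0$, minima give $\ge0$) therefore belongs to the degenerate elliptic operator $-F=\min\{c-(\mathcal L-r)\varphi,\,\varphi-G\}$ with the outward boundary operator $-\partial_i$, not to $F$ and $\partial_i$. Your subsolution step shows concretely why the literal statement is out of reach. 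If $\varphi$ touches $V$ from above at an interior point $x$, so does $\varphi+M|\cdot-x|^2$, and $(\mathcal L-r)\big(\varphi+M|\cdot-x|^2\big)(x)-c(x)=(\mathcal L-r)\varphi(x)-c(x)+M\,\mathrm{tr}\,a(x)$, which is positive for large $M$ by \textup{(A2)}. Hence your contradiction hypothesis ``$(\mathcal L-r)\varphi-c\ge\eta>0$ near $x$'' is always realizable and can never be refuted. The subsolution clause of the definition fails at every interior point where $V$ admits an upper test function, and subtracting $M|\cdot-x|^2$ shows that the supersolution clause fails at every continuation point admitting a lower one. Your own $\tau=\infty$ computation confirms this: it yields $(\mathcal L-r)\varphi(x)-c(x)\le0$ at minima, the opposite of what the definition requires. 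What the DPP actually yields is the statement obtained by interchanging ``local maximum'' and ``local minimum'' in Definition~\ref{def:reflected-viscosity}, and you should announce and prove that version explicitly.

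With that correction, your two arguments go through once their roles are reversed.

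\emph{Minima.} At a minimum of $V-\varphi$ you need $F[\varphi](x)\le0$ in the interior and $\min\{F[\varphi](x),\partial_i\varphi(x)\}\le0$ on $\{x_i=0\}$. This is exactly your $\tau=\infty$ estimate. You can close it either by your normalization (a convex combination of $(\mathcal L-r)\varphi(x)-c(x)$ and $\partial_i\varphi(x)$ is $\le0$), or by contradiction from $(\mathcal L-r)\varphi-c>0$ on $B$ and $\partial_i\varphi>0$ on $B\cap\{x_i=0\}$.

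\emph{Maxima.} At a maximum you need $F[\varphi](x)\ge0$, resp.\ $\max\{F[\varphi](x),\partial_i\varphi(x)\}\ge0$. This is trivial if $V(x)=G(x)$. Otherwise assume $V-G\ge\delta_0$ and $(\mathcal L-r)\varphi-c\le-\eta$ on $\overline B$, and $\partial_i\varphi\le0$ on the active faces. Take an $\varepsilon$-optimal $\tau$ in Proposition~\ref{prop:dpp}, and use $G(X_\tau)\le\varphi(X_\tau)-\delta_0$ on $\{\tau\le\theta_h\}$ and $V(X_{\theta_h})\le\varphi(X_{\theta_h})$ on $\{\tau>\theta_h\}$. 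This gives $0\le-\eta\,\mathbb E\int_0^{\tau\wedge\theta_h}e^{-rs}\,ds-\delta_0e^{-rh}\,\mathbb P(\tau\le\theta_h)+\varepsilon$. Since $\theta_h\le h$, the negative terms are at most $-\eta e^{-rh}\mathbb E[\theta_h]$ uniformly in $\tau$ once $\eta h\le\delta_0$, so a small $\varepsilon$ gives the contradiction.

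This is also the only pairing in which the paper's own sketch is consistent, since there the direct short-time DPP estimate gives $\le0$ and the contradiction argument gives the reverse inequality.

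\emph{Corner.} You cannot test each active $i$ separately at $(0,0)$. Both reflection integrals enter the same It\^o expansion, so the contradiction needs simultaneous sign information on $\partial_1\varphi$ and $\partial_2\varphi$. What the argument delivers is the joint condition $\min\{F[\varphi],\partial_1\varphi,\partial_2\varphi\}\le0$ at minima and $\max\{F[\varphi],\partial_1\varphi,\partial_2\varphi\}\ge0$ at maxima, and the corner clause must be read that way.
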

\begin{proof}
The proof is the standard viscosity argument from the dynamic programming
principle \cite{crandall1992user,peskir2006optimal,gao2022rolling}. At an interior
upper contact of $V-\varphi$, applying the DPP over a short time interval and
using It\^o's formula for $\varphi(X)$ gives
$\max\{(\mathcal L-r)\varphi(x)-c(x),\;G(x)-V(x)\}\le0$. At an interior lower
contact, the usual contradiction argument gives the reverse inequality. At a
boundary point $x_i=0$, the reflection term in It\^o's formula is
$\int_0^t \partial_i\varphi(X_s)\,dL_s^i$, and the relaxed Neumann condition is
encoded by the min--max boundary conditions in
Definition~\ref{def:reflected-viscosity}.
\end{proof}

\subsection{Verification under measure superharmonicity}
\label{sec:verification-measure-superharmonicity}
We now state the verification principle in the form used later. The theorem is
intentionally formulated with explicit regularity and measure assumptions. A
continuous viscosity supersolution of a linear elliptic inequality does not by
itself provide the Sobolev or measure structure needed for an
It\^o--Krylov--Tanaka formula \cite{krylov1987approach}.

Let $\mathcal O\subset\interior{\R}^2_+$ be open and define
$\tau_{\mathcal O^c}:=\inf\{t\ge0:X_t\notin\mathcal O\}$.

\begin{definition}[Generalized It\^o class]
\label{def:generalized-ito-class}
A continuous function $u:\R^2_+\to\R$ is said to belong to the generalized
It\^o class for $X$ if the following holds. Whenever the distribution
$(\mathcal L-r)u-c$ is represented in the interior by a signed Radon measure
$\mu_u$, there is a corresponding signed continuous additive functional
$A^{\mu_u}$ such that, after localization,
\[
   e^{-rt}u(X_t)
   =
   u(x)
   +\int_0^t e^{-rs}c(X_s)\,ds
   +\int_0^t e^{-rs}\,dA_s^{\mu_u}
   +M_t
   +\text{reflection boundary terms},
\]
where $M$ is a local martingale. If $u$ satisfies the reflected Neumann
condition in the trace sense, the reflection boundary terms vanish.
\end{definition}

\begin{theorem}[Verification under measure superharmonicity]
\label{thm:verification-measure}
Let $u:\R^2_+\to\R$ be continuous and of polynomial growth. Assume:
\begin{enumerate}
\item[\textup{(V1)}] $u\ge G$ on $\R^2_+$.
\item[\textup{(V2)}] $u=G$ on $\R^2_+\setminus\mathcal O$.
\item[\textup{(V3)}] For some $p>2$, $u\in W^{2,p}_{\mathrm{loc}}(\mathcal O)$,
and $(\mathcal L-r)u-c=0$ a.e.\ in $\mathcal O$.
\item[\textup{(V4)}] On $\interior{\R}^2_+$, the distribution
$(\mathcal L-r)u-c$ extends to a signed Radon measure $\mu_u$ satisfying
$\mu_u\le0$ in the sense of measures; equivalently, for every nonnegative
$\phi\in C_c^\infty(\interior{\R}^2_+)$,
$\langle(\mathcal L-r)u-c,\phi\rangle\le0$.
\item[\textup{(V5)}] $u$ belongs to the generalized It\^o class of
Definition~\ref{def:generalized-ito-class}, and the reflected boundary
contribution is nonpositive in the supermartingale calculation. In particular,
this holds if $u$ satisfies the normal-reflection condition $\partial_i u=0$
on $\{x_i=0\}$, $i=1,2$, in the relevant trace sense.
\item[\textup{(V6)}] The localization and uniform-integrability conditions
needed to take expectations and pass to the limit hold for $u(X)$, $G(X)$,
$c(X)$, and the additive functional $A^{\mu_u}$.
\end{enumerate}
Then $u\ge V$ on $\R^2_+$. If, in addition, $\tau_{\mathcal O^c}<\infty$
$\mathbb P_x$-a.s.\ for every $x$, and the stopped It\^o formula is exact on
$[0,\tau_{\mathcal O^c}]$, then $u=V$ on $\R^2_+$ and $\tau_{\mathcal O^c}$ is
optimal.
\end{theorem}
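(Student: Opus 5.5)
The argument is a supermartingale argument driven by the generalized It\^o class of Definition~\ref{def:generalized-ito-class}. We treat the inequality $u\ge V$ first and the optimality claim second.

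\emph{Step 1 (the supermartingale identity).} Fix $x\in\R^2_+$. By (V3)--(V4), the distribution $(\mathcal L-r)u-c$ is represented by a signed Radon measure $\mu_u\le0$. By (V5), after localization along a sequence $\rho_n\uparrow\infty$,
\[
   e^{-r(t\wedge\rho_n)}u(X_{t\wedge\rho_n})
   =u(x)+\int_0^{t\wedge\rho_n}e^{-rs}c(X_s)\,ds+\int_0^{t\wedge\rho_n}e^{-rs}\,dA^{\mu_u}_s+M_{t\wedge\rho_n}+B_{t\wedge\rho_n}.
\]
Here $M$ is a local martingale and $B$ is the reflection contribution, which is nonpositive by (V5). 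Since $\mu_u\le0$, the Revuz correspondence makes $A^{\mu_u}$ nonincreasing. Hence the process
\[
   Z_t:=e^{-rt}u(X_t)-\int_0^te^{-rs}c(X_s)\,ds
\]
is a local supermartingale. We choose $\rho_n$ so that it also reduces $M$ to a true martingale.

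\emph{Step 2 ($u\ge V$).} Take an arbitrary $\tau\in\mathcal T$ and apply optional stopping at $\tau\wedge t\wedge\rho_n$. This gives
\[
   u(x)\ge\mathbb E_x\Bigl[e^{-r(\tau\wedge t\wedge\rho_n)}u(X_{\tau\wedge t\wedge\rho_n})-\int_0^{\tau\wedge t\wedge\rho_n}e^{-rs}c\,ds\Bigr].
\]
Using (V1), replace $u$ by $G$ inside the expectation, which only lowers the right-hand side. Then let $n\to\infty$ and $t\to\infty$. The running-cost integral converges by monotone convergence, since $c\ge0$. The terminal term converges by the uniform integrability in (V6), which together with (A6) gives the needed control; on $\{\tau=\infty\}$ we use $e^{-rt}G(X_t)\to0$ in $L^1$, consistent with the convention $e^{-r\tau}G(X_\tau)=0$ there. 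Taking the supremum over $\tau$ yields $u\ge V$.

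\emph{Step 3 (equality and optimality).} Now set $\tau^*=\tau_{\mathcal O^c}$. Before $\tau^*$ the path lies in $\mathcal O$, where (V3) gives $(\mathcal L-r)u-c=0$ a.e. With uniform ellipticity (A2) and $p>2$, Krylov's estimate shows that the occupation measure charges no null sets, so $A^{\mu_u}$ does not increase on $[0,\tau^*)$. Since $\mathcal O\subset\interior{\R}^2_+$, the process $X$ does not touch the axes before $\tau^*$, so there is no reflection contribution either. This is exactly where the hypothesis that the stopped formula is exact on $[0,\tau^*]$ enters: it guarantees the jump of $A^{\mu_u}$ at the exit time does not contribute. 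The inequalities of Step 2 therefore become equalities up to $\tau^*\wedge t\wedge\rho_n$. Pass to the limit as before, using $\tau^*<\infty$ a.s. and $u(X_{\tau^*})=G(X_{\tau^*})$ from (V2). This gives $u(x)=\mathbb E_x[e^{-r\tau^*}G(X_{\tau^*})-\int_0^{\tau^*}e^{-rs}c\,ds]\le V(x)$. Combined with Step 2, $u=V$ and $\tau^*$ is optimal.

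\emph{Main obstacle.} The delicate point is the limit passage in Steps 2--3. We need uniform integrability of $e^{-r(\tau\wedge t\wedge\rho_n)}u(X_{\cdot})$ for arbitrary $\tau$, and the local martingale $M$ must be genuinely reducible. I would derive both from (V6) combined with the polynomial-growth bound (A6), if needed via the Lyapunov estimate of Lemma~\ref{lem:lyapunov-integrability}. A second subtle point is the sign of the boundary term when $\mathcal O$ is not away from the axes. Here I would rely directly on (V5): with $\partial_iu=0$ in the trace sense, the term $\int\partial_iu(X_s)\,dL^i_s$ vanishes.
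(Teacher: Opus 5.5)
Your proposal is correct and follows the paper's proof essentially step for step: the generalized It\^o formula with $\mu_u\le0$ and \textup{(V5)} gives the supermartingale inequality; \textup{(V1)}, \textup{(V6)} and a supremum over $\tau$ give $u\ge V$; and on $[0,\tau_{\mathcal O^c}]$ hypothesis \textup{(V3)} removes the defect so that \textup{(V2)} yields $u\le V$. Your added details (monotone convergence for the cost, and no reflection before $\tau_{\mathcal O^c}$ since $\mathcal O\subset\interior{\R}^2_+$) are sound, but two remarks are slightly off: $A^{\mu_u}$ is continuous, so there is no exit-time jump for the exactness hypothesis to suppress (it is better read as carrying the It\^o--Krylov identity from compact subdomains of $\mathcal O$ up to $\tau_{\mathcal O^c}$), and on $\{\tau=\infty\}$ you do not need $e^{-rt}G(X_t)\to0$ in $L^1$, which the hypotheses do not obviously give, because $G\ge0$ lets Fatou's lemma deliver the one-sided limit you actually need.
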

\begin{proof}
Let $\tau$ be a stopping time and let $(\tau_n)$ be a localizing sequence such
that all terms below are integrable. Applying the generalized
It\^o--Krylov--Tanaka formula \cite{krylov1987approach,revuz2013continuous} to
$e^{-rt}u(X_t)$ on $[0,\tau_n]$ gives
\[
   e^{-r\tau_n}u(X_{\tau_n})
   =
   u(x)
   +\int_0^{\tau_n}e^{-rs}c(X_s)\,ds
   +\int_0^{\tau_n}e^{-rs}\,dA_s^{\mu_u}
   +M_{\tau_n},
\]
after the reflection contribution has been controlled by \textup{(V5)}. Since
$\mu_u\le0$, the finite-variation term associated with $A^{\mu_u}$ is
nonpositive. Taking expectations gives
\[
   \mathbb E_x\left[
      e^{-r\tau_n}u(X_{\tau_n})
      -\int_0^{\tau_n}e^{-rs}c(X_s)\,ds
   \right]
   \le u(x).
\]
Using $u\ge G$,
\[
   \mathbb E_x\left[
      e^{-r\tau_n}G(X_{\tau_n})
      -\int_0^{\tau_n}e^{-rs}c(X_s)\,ds
   \right]
   \le u(x).
\]
Letting $n\to\infty$ by \textup{(V6)} and taking the supremum over $\tau$
yields $V(x)\le u(x)$.

Now apply the same formula on $[0,t\wedge\tau_{\mathcal O^c}]$. On
$\mathcal O$, assumption \textup{(V3)} gives $(\mathcal L-r)u-c=0$, so no
measure-superharmonic defect is accumulated before $\tau_{\mathcal O^c}$. Hence
\[
   u(x)
   =
   \mathbb E_x\left[
      e^{-r(t\wedge\tau_{\mathcal O^c})}
      u(X_{t\wedge\tau_{\mathcal O^c}})
      -\int_0^{t\wedge\tau_{\mathcal O^c}}e^{-rs}c(X_s)\,ds
   \right].
\]
Letting $t\to\infty$ and using \textup{(V2)} and \textup{(V6)} gives
\[
   u(x)
   =
   \mathbb E_x\left[
      e^{-r\tau_{\mathcal O^c}}G(X_{\tau_{\mathcal O^c}})
      -\int_0^{\tau_{\mathcal O^c}}e^{-rs}c(X_s)\,ds
   \right]
   \le V(x).
\]
Together with $V\le u$, this proves $u=V$, and $\tau_{\mathcal O^c}$ is
optimal.
\end{proof}

\begin{remark}[No hidden regularity theorem]
\label{rem:no-hidden-regularity}
Theorem~\ref{thm:verification-measure} is a verification theorem, not a
regularity theorem. The assumptions that $u\in W^{2,p}_{\mathrm{loc}}$, that
$(\mathcal L-r)u-c$ extends to a nonpositive signed measure, and that the
generalized It\^o formula applies are substantive hypotheses
\cite{krylov1987approach,bensoussan2011applications,yu2026from,friedman1982variational}. They are not deduced here
from viscosity supersolution status.
\end{remark}

\section{Conditional geometry of the stopping set}
\label{sec:conditional-geometry}
This section isolates the geometric assumptions needed to represent the
stopping set as an epigraph. The main point is that the graph structure is not
derived here from order preservation of the reflected diffusion, nor from
monotonicity of the infinitesimal stopping gain. It is derived from a direct
monotonicity assumption on the stopping advantage $H:=V-G$.

\subsection{Structural monotonicity assumptions}
\label{sec:structural-monotonicity}
Throughout this section define
$   H(x):=V(x)-G(x)$,
$   \mathcal D:=\{x\in\R^2_+:H(x)=0\}$,
and
$   \mathcal C:=\{x\in\R^2_+:H(x)>0\}$.
By Lemma~\ref{lem:immediate-stopping}, $H\ge0$. The following assumption is the
structural input for the graph theorem.

\begin{assumption}[Vertical monotonicity of the stopping advantage]
\label{ass:vertical-H}
For every $x_1\ge0$, the map $x_2\longmapsto H(x_1,x_2)$ is nonincreasing on
$[0,\infty)$.
\end{assumption}

When a monotonicity statement for the boundary itself is required, we impose
the following additional condition.

\begin{assumption}[Horizontal monotonicity of the stopping advantage]
\label{ass:horizontal-H}
For every $x_2\ge0$, the map $x_1\longmapsto H(x_1,x_2)$ is monotone on
$[0,\infty)$, with a direction independent of $x_2$. That is, either all
horizontal sections are nonincreasing, or all horizontal sections are
nondecreasing.
\end{assumption}

\begin{remark}[Logical status of the graph assumption]
\label{rem:graph-assumption-status}
The epigraph theorem below assumes monotonicity of $H=V-G$. We do not claim
that Assumption~\ref{ass:vertical-H} follows from order preservation of the
reflected SDE or from monotonicity of $\Gamma:=c+rG-\mathcal LG$. Order
preservation may be useful for proving monotonicity of $V$ itself under
compatible monotonicity of the reward and cost, but monotonicity of $V$ does
not imply monotonicity of $H=V-G$. In particular, a common attempted coupling
proof of vertical monotonicity has the wrong sign under the cost convention
used here. If $z\le y$ coordinatewise and $c$ is coordinatewise nonincreasing,
then along an order-preserving coupling $c(X_s^z)-c(X_s^y)\ge0$. Thus the cost
difference contributes with the opposite sign from the one needed to prove
that $H$ is vertically nonincreasing. Any verification of
Assumption~\ref{ass:vertical-H} is therefore a model-specific task.
\end{remark}

\begin{remark}[Optional order-preservation assumption]
\label{rem:order-preservation-optional}
If the reflected diffusion is order preserving and $c$ is coordinatewise
nonincreasing, then one may obtain monotonicity of $V$ by the usual coupling
argument. This observation is separate from Assumptions~\ref{ass:vertical-H}
and \ref{ass:horizontal-H}, and it is not used to prove the epigraph structure
below.
\end{remark}

\subsection{Conditional epigraph theorem}
\label{sec:conditional-epigraph}
For $x_1\ge0$, write the vertical section of the stopping set as
$   \mathcal D(x_1):=\{x_2\ge0:(x_1,x_2)\in\mathcal D\}$.

\begin{assumption}[Nonempty vertical stopping sections]
\label{ass:nonempty-sections}
For every $x_1\ge0$, $\mathcal D(x_1)\ne\varnothing$.
\end{assumption}

\begin{theorem}[Conditional epigraph structure]
\label{thm:conditional-epigraph}
Assume that $V$ is continuous. Suppose that Assumptions~\ref{ass:vertical-H}
and \ref{ass:nonempty-sections} hold. Define
$   b(x_1):=\inf\mathcal D(x_1)$ for $x_1\ge0$.
Then $b$ is well defined with values in $[0,\infty)$, and
$   \mathcal D
   =
   \{(x_1,x_2)\in\R^2_+:x_2\ge b(x_1)\}$.
   
If, in addition, Assumption~\ref{ass:horizontal-H} holds, then $b$ is
monotone. More precisely, $H$ nonincreasing in $x_1$ implies $b$ nonincreasing,
whereas $H$ nondecreasing in $x_1$ implies $b$ nondecreasing.
\end{theorem}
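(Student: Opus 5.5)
The argument is elementary and works section by section. By Assumption~\ref{ass:nonempty-sections}, each $\mathcal D(x_1)$ is a nonempty subset of $[0,\infty)$. Hence $b(x_1)=\inf\mathcal D(x_1)$ exists and lies in $[0,\infty)$.

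Since $V$ is continuous, Lemma~\ref{lem:value-lsc} shows that $H=V-G$ is continuous and $\mathcal D=\{H=0\}$ is closed. The section $\mathcal D(x_1)$ is the preimage of the closed set $\mathcal D$ under the continuous map $x_2\mapsto(x_1,x_2)$, so it is closed in $[0,\infty)$. It therefore contains its infimum, giving $H(x_1,b(x_1))=0$.

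\textbf{Epigraph identity.} For the inclusion $\supseteq$, take $x_2\ge b(x_1)$. Assumption~\ref{ass:vertical-H} gives $0\le H(x_1,x_2)\le H(x_1,b(x_1))=0$, where the lower bound is $H\ge0$ from Lemma~\ref{lem:immediate-stopping}. Hence $(x_1,x_2)\in\mathcal D$. For the inclusion $\subseteq$, if $(x_1,x_2)\in\mathcal D$ then $x_2\in\mathcal D(x_1)$, so $x_2\ge b(x_1)$ by definition of the infimum. This proves $\mathcal D=\{x_2\ge b(x_1)\}$.

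\textbf{Monotonicity of $b$.} Suppose Assumption~\ref{ass:horizontal-H} holds with $H$ nonincreasing in $x_1$, and fix $x_1<x_1'$. Then $0\le H(x_1',b(x_1))\le H(x_1,b(x_1))=0$. So $b(x_1)\in\mathcal D(x_1')$, and therefore $b(x_1')\le b(x_1)$, i.e.\ $b$ is nonincreasing. In the nondecreasing case the same argument with the roles of $x_1$ and $x_1'$ swapped gives $0\le H(x_1,b(x_1'))\le H(x_1',b(x_1'))=0$. Hence $b(x_1)\le b(x_1')$, i.e.\ $b$ is nondecreasing.

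The only point needing care is that the infimum is attained. This is exactly where the continuity of $V$ enters, through the closedness of $\mathcal D$ from Lemma~\ref{lem:value-lsc}. Without it, the vertical monotonicity would only give $\mathcal D(x_1)\supseteq(b(x_1),\infty)$, with the endpoint $b(x_1)$ itself possibly missing.
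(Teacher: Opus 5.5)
Your proof is correct and follows essentially the same route as the paper: nonemptiness gives $b(x_1)<\infty$, continuity of $H=V-G$ makes each vertical section closed, vertical monotonicity makes each section an upper interval, and horizontal monotonicity carries stopping points from one section to another. The only cosmetic difference is that you use the attained infimum $H(x_1,b(x_1))=0$ directly, so your monotonicity step needs no limit, whereas the paper first proves the upper-interval property and then uses closedness, and in the horizontal step takes $x_2>b(x_1)$ and lets $x_2\downarrow b(x_1)$.
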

\begin{proof}
Fix $x_1\ge0$. By Assumption~\ref{ass:nonempty-sections}, $\mathcal D(x_1)$ is
nonempty, so $b(x_1)<\infty$. Let $z_2\in\mathcal D(x_1)$ and let $y_2\ge z_2$.
Since $H\ge0$ and $H(x_1,z_2)=0$, Assumption~\ref{ass:vertical-H} gives
$0\le H(x_1,y_2)\le H(x_1,z_2)=0$. Hence $H(x_1,y_2)=0$, so
$y_2\in\mathcal D(x_1)$. Therefore every vertical stopping section is an upper
interval.

Because $V$ and $G$ are continuous, $H$ is continuous, so $\mathcal D=\{H=0\}$
is closed. It follows that $\mathcal D(x_1)=[b(x_1),\infty)$, and hence
$\mathcal D=\{(x_1,x_2)\in\R^2_+:x_2\ge b(x_1)\}$.

Now assume horizontal monotonicity. If $H$ is nonincreasing in $x_1$, let
$x_1\le y_1$ and take $x_2>b(x_1)$, so that $H(x_1,x_2)=0$. By horizontal
monotonicity, $0\le H(y_1,x_2)\le H(x_1,x_2)=0$, so $x_2\in\mathcal D(y_1)$ and
$b(y_1)\le x_2$. Letting $x_2\downarrow b(x_1)$ gives $b(y_1)\le b(x_1)$, so
$b$ is nonincreasing. If $H$ is nondecreasing in $x_1$, the same argument with
the roles of $x_1$ and $y_1$ reversed gives $b(x_1)\le b(y_1)$, so $b$ is
nondecreasing.
\end{proof}

\subsection{Nonemptiness as an assumption or barrier condition}
\label{sec:nonemptiness}
Assumption~\ref{ass:nonempty-sections} is a boundary-at-infinity condition: for
every fixed $x_1$, immediate stopping eventually becomes optimal as $x_2$
increases. This property is not a local consequence of an inequality such as
$(\mathcal L-r)G-c\le -\eta$ for large $x_2$. Such an inequality may be useful
in concrete examples, but by itself it does not rule out stopping strategies
that wait for favourable excursions or exploit movement in the other
coordinate. A rigorous proof of nonemptiness requires a global argument. The
following barrier condition is one possible sufficient hypothesis.

\begin{assumption}[Optional superharmonic barrier at infinity]
\label{ass:barrier-infinity}
There exist $R>0$, a continuous function $W:\R^2_+\to\R$, and an open set
$\mathcal U_R\supseteq\{x\in\R^2_+:x_2>R\}$ such that:
\begin{enumerate}
\item[\textup{(B1)}] $W\ge G$ on $\R^2_+$.
\item[\textup{(B2)}] $W=G$ on $\{x\in\R^2_+:x_2\ge R\}$.
\item[\textup{(B3)}] $W$ satisfies the reflected Neumann condition on the
coordinate axes in the trace or reflected-viscosity sense.
\item[\textup{(B4)}] $W$ is globally $r$-superharmonic relative to the running
cost: $(\mathcal L-r)W-c\le0$ in the measure-superharmonic sense required by
Theorem~\ref{thm:verification-measure}.
\item[\textup{(B5)}] The verification hypotheses of
Theorem~\ref{thm:verification-measure} apply to $W$ with continuation region
contained in $\R^2_+\setminus\{x_2\ge R\}$.
\end{enumerate}
\end{assumption}

\begin{proposition}[Barrier implies nonempty sections]
\label{prop:barrier-nonempty}
If Assumption~\ref{ass:barrier-infinity} holds, then $V(x)=G(x)$ for all
$x\in\R^2_+$ with $x_2\ge R$. Consequently,
Assumption~\ref{ass:nonempty-sections} holds.
\end{proposition}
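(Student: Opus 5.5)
The plan is to sandwich $V$ between $G$ and the barrier $W$, and then read off equality on the region where $W$ coincides with $G$.

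\textbf{Step 1 (upper bound).} Apply the first half of Theorem~\ref{thm:verification-measure} to $u:=W$, taking as $\mathcal O$ an open set with $\mathcal O\subseteq\interior{\R}^2_+\setminus\{x_2\ge R\}$; such a set is provided by \textup{(B5)}. The hypotheses are checked one by one:
\begin{enumerate}
\item[(a)] \textup{(V1)} is \textup{(B1)}.
\item[(b)] \textup{(V2)}, \textup{(V3)} and \textup{(V6)} are supplied by \textup{(B5)}.
\item[(c)] \textup{(V4)} is the measure superharmonicity \textup{(B4)}.
\item[(d)] \textup{(V5)} follows from the Neumann condition \textup{(B3)}. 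The reflection boundary terms $\int_0^t e^{-rs}\partial_iW(X_s)\,dL^i_s$ vanish, because $dL^i$ is carried by $\{X^i=0\}$, where $\partial_i W=0$.
\end{enumerate}
The conclusion $u\ge V$ of that theorem uses only the supermartingale half of the argument. It does not need the finiteness of $\tau_{\mathcal O^c}$ or the exactness assumption. We therefore obtain $V\le W$ on $\R^2_+$.

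\textbf{Step 2 (squeeze).} Lemma~\ref{lem:immediate-stopping} gives $V\ge G$. Take any $x$ with $x_2\ge R$. Then \textup{(B2)} gives $W(x)=G(x)$, so
\[
   G(x)\le V(x)\le W(x)=G(x).
\]
Hence $V(x)=G(x)$, that is, $x\in\mathcal D$.

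\textbf{Step 3 (nonempty sections).} Fix $x_1\ge0$. By Step~2, the point $(x_1,R)$ lies in $\mathcal D$. Therefore $R\in\mathcal D(x_1)$, so $\mathcal D(x_1)\ne\varnothing$, which is Assumption~\ref{ass:nonempty-sections}.

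\textbf{Expected obstacle.} The only delicate point is Step~1, specifically what exactly \textup{(B5)} is allowed to deliver. Theorem~\ref{thm:verification-measure} assumes $(\mathcal L-r)u-c=0$ a.e.\ in $\mathcal O$ and $u=G$ off $\mathcal O$. The barrier $W$, however, is only assumed superharmonic, and it need not equal $G$ on all of $\{x_2<R\}\setminus\mathcal O$.

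I would handle this by observing that the upper-bound part of the proof of Theorem~\ref{thm:verification-measure} uses only \textup{(V1)}, \textup{(V4)}, \textup{(V5)} and \textup{(V6)}. It applies It\^o's formula on $[0,\tau_n]$, drops the nonpositive term $\int e^{-rs}dA^{\mu_W}_s$, uses $W\ge G$, and lets $n\to\infty$. So I would rerun exactly that half of the argument for $W$, invoking \textup{(B5)} only for the generalized It\^o formula and the uniform-integrability conditions. This avoids relying on \textup{(V2)}--\textup{(V3)}, which are needed only for the equality statement.
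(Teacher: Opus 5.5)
Your proposal is correct and is essentially the paper's proof: the verification theorem gives $W\ge V$, then \textup{(B2)} together with Lemma~\ref{lem:immediate-stopping} gives the squeeze $G\le V\le W=G$ on $\{x_2\ge R\}$, and hence $[R,\infty)\subseteq\mathcal D(x_1)$ for every $x_1\ge0$. Your extra remark that only the majorization half of Theorem~\ref{thm:verification-measure} is needed, which uses \textup{(V1)} and \textup{(V4)}--\textup{(V6)} but not \textup{(V2)}--\textup{(V3)}, is a correct refinement that the paper leaves implicit.
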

\begin{proof}
By Assumption~\ref{ass:barrier-infinity} and the verification theorem,
$W\ge V$. Since $W=G$ on $\{x_2\ge R\}$ and $V\ge G$, we obtain
$G(x)\le V(x)\le W(x)=G(x)$ for $x_2\ge R$. Hence $V=G$ on $\{x_2\ge R\}$.
Therefore, for every $x_1\ge0$, $[R,\infty)\subseteq\mathcal D(x_1)$, so
$\mathcal D(x_1)\ne\varnothing$.
\end{proof}

\begin{remark}
\label{rem:nonemptiness-warning}
Proposition~\ref{prop:barrier-nonempty} is only a sufficient condition. In
applications one may verify Assumption~\ref{ass:nonempty-sections} by other
global arguments. What is not used in this paper is an unsupported inference
from the local far-field inequality $(\mathcal L-r)G-c\le-\eta$ alone.
\end{remark}

\subsection{Regularity status}
\label{sec:regularity-status}
The preceding theorem gives a graph representation under monotonicity of the
stopping advantage. It does not prove continuity, local Lipschitz regularity,
or smooth fit of the free boundary. Those properties are delicate and depend
on additional hypotheses; they are therefore imposed explicitly when needed.
Let
$   \Delta:=\{(x_1,x_2)\in\R^2_+:x_1=\alpha x_2\}$
denote the diagonal kink of the reward.

\begin{assumption}[Local free-boundary regularity]
\label{ass:free-boundary-regularity}
Whenever a killed-resolvent boundary characterization or a boundary trace
statement is used, we assume that the boundary $b$ obtained in
Theorem~\ref{thm:conditional-epigraph} satisfies
$   b\in C((0,\infty))\cap W^{1,\infty}_{\mathrm{loc}}((0,\infty))$.
Moreover, for every compact interval $I\Subset(0,\infty)$,
$\inf_{x_1\in I}b(x_1)>0$.
\end{assumption}

\begin{remark}[Regularity is not automatic]
\label{rem:regularity-not-automatic}
Assumption~\ref{ass:free-boundary-regularity} is not a consequence of
Theorem~\ref{thm:conditional-epigraph}. It is a separate free-boundary
regularity hypothesis. In the optimal stopping literature, continuity,
Lipschitz regularity, and smooth fit are usually proved under additional
problem-specific assumptions. Continuity results for two-dimensional optimal
stopping boundaries and probabilistic proofs of local Lipschitz regularity
require hypotheses beyond the mere existence of an epigraph representation; see
Peskir \cite{peskir2019continuity}, De Angelis and Stabile \cite{de2019lipschitz},
De Angelis \cite{de2015note}, and De Angelis and Peskir
\cite{de2020global}.
\end{remark}

The following deterministic observation may be useful for excluding jumps when
a strict sign separation of the stopping gain is known. It is not a
probabilistic regularity theorem.

\begin{lemma}[Strict sign separation excludes jumps away from the diagonal]
\label{lem:sign-separation-no-jumps}
Assume that $\mathcal D=\{(x_1,x_2):x_2\ge b(x_1)\}$, where $b$ is monotone.
Let $U\Subset \interior{\R}^2_+\setminus\Delta$ be open, and suppose that
$\Gamma$ is continuous on $U$. Assume further that there exists $\eta>0$ such
that $\Gamma(x)\ge\eta$ for $x\in U\cap\mathcal D$ and $\Gamma(x)\le-\eta$ for
$x\in U\cap\mathcal C$. Then $b$ has no jump discontinuity whose vertical jump
segment is contained in $U$.
\end{lemma}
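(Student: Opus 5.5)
The argument is by contradiction and is purely deterministic. Suppose $b$ has a jump at some $x_1^0$ whose vertical jump segment lies in $U$. Since $b$ is monotone, the one-sided limits $b(x_1^0\pm)$ exist. A jump means $\underline\beta:=\min\{b(x_1^0-),b(x_1^0+)\}<\overline\beta:=\max\{b(x_1^0-),b(x_1^0+)\}$, and the jump segment is $S=\{x_1^0\}\times[\underline\beta,\overline\beta]$. We assume $S\subset U$.

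The first step is to show that every point of the open segment $S^\circ=\{x_1^0\}\times(\underline\beta,\overline\beta)$ lies in the closure of both $\mathcal D$ and $\mathcal C$. Treat the case where $b$ is nonincreasing, so that $b(x_1^0-)=\overline\beta$ and $b(x_1^0+)=\underline\beta$; the nondecreasing case is symmetric. Fix $(x_1^0,x_2)$ with $\underline\beta<x_2<\overline\beta$. For $y_1>x_1^0$ close to $x_1^0$ we have $b(y_1)\le b(x_1^0+)+o(1)<x_2$, hence $(y_1,x_2)\in\mathcal D$ by the epigraph representation. For $y_1<x_1^0$ close to $x_1^0$ we have $b(y_1)\ge b(x_1^0-)>x_2$, hence $(y_1,x_2)\in\mathcal C$. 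Because $U$ is open and contains $S$, these approximating points lie in $U$ once $y_1$ is close enough to $x_1^0$.

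The second step uses continuity of $\Gamma$ on $U$. Passing to the limit along the two sequences gives $\Gamma(x_1^0,x_2)\ge\eta$ from the points in $U\cap\mathcal D$, and $\Gamma(x_1^0,x_2)\le-\eta$ from the points in $U\cap\mathcal C$. Since $\eta>0$, these two bounds are incompatible, so no such jump can exist.

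The main obstacle is only bookkeeping: the argument must work with one-sided limits regardless of the value $b(x_1^0)$ takes at the jump point itself, which is why it uses the open segment rather than the closed one. The hypothesis $U\Subset\interior{\R}^2_+\setminus\Delta$ serves only to make $\Gamma$ a genuine continuous function on $U$, with no singular diagonal part. No probabilistic input is required.
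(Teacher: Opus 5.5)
Your proof is correct and follows essentially the same deterministic contradiction as the paper: continuity of $\Gamma$ forces a point of the jump segment to satisfy both $\Gamma\ge\eta$ and $\Gamma\le-\eta$. The only difference is bookkeeping. The paper puts the point $z=(x_0,y_0)$ itself in $\mathcal D$ (or $\mathcal C$) using the epigraph representation at $x_0$, approaches it from one side only, and treats left and right jumps separately. You instead approach a point of the open segment from both sides using one-sided limits, which covers all jump configurations at once without reference to the value $b(x_0)$.
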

\begin{proof}
Assume, for definiteness, that $b$ is nondecreasing and has a right jump at
$x_0$: $b(x_0)<b(x_0+):=\lim_{x\downarrow x_0}b(x)$. Choose
$y_0\in(b(x_0),b(x_0+))$ such that $z:=(x_0,y_0)\in U$. Since $y_0>b(x_0)$, the
epigraph representation gives $z\in\mathcal D$, so $\Gamma(z)\ge\eta$. On the
other hand, for $x>x_0$ sufficiently close to $x_0$ one has $y_0<b(x)$, hence
$(x,y_0)\in\mathcal C$. By continuity of $\Gamma$ on $U$, letting
$x\downarrow x_0$ gives $\Gamma(z)\le-\eta$. This contradiction excludes such a
jump. The cases of left jumps and nonincreasing $b$ are identical after
reversing the relevant inequalities.
\end{proof}

\begin{assumption}[Weak smooth fit away from the diagonal]
\label{ass:weak-smooth-fit}
At $\mathcal H^1$-almost every differentiability point
$z=(x_1,b(x_1))\in \partial\mathcal C\cap\interior{\R}^2_+\setminus\Delta$ of
the free boundary, the continuation-side non-tangential gradient trace of $V$
exists and satisfies
\[
   \lim_{\substack{x\to z\\x\in\mathcal C\ \mathrm{n.t.}}}
   \nabla V(x)
   =
   \nabla G(z),
   \qquad\text{equivalently}\qquad
   \lim_{\substack{x\to z\\x\in\mathcal C\ \mathrm{n.t.}}}
   \nabla(V-G)(x)=0.
\]
\end{assumption}

\begin{remark}[Use of smooth fit]
\label{rem:smooth-fit-status}
Assumption~\ref{ass:weak-smooth-fit} is imposed when needed; it is not proved
from the standing assumptions. Even for multidimensional diffusions without
reflection, smooth fit depends on regularity of the boundary point and on
additional analytic or probabilistic hypotheses
\cite{peskir2019continuity,de2020global,lamberton2013optimal}. In the present
reflected setting, those hypotheses must be verified separately in any concrete
model.
\end{remark}

\section{The singular stopping-gain measure}
\label{sec:singular-stopping-gain}
The max-type reward $G(x_1,x_2)=x_1\vee \alpha x_2$ is not $C^2$ across the
diagonal $\Delta:=\{(x_1,x_2)\in\R^2_+:x_1=\alpha x_2\}$. Consequently, the
object $\Gamma:=c+rG-\mathcal LG$ must be interpreted as a signed measure
rather than as an ordinary function. This section gives the precise
decomposition of $\Gamma$ into its absolutely continuous part away from
$\Delta$ and its singular diagonal part. The sign convention is fixed
throughout:
$\Gamma=c+rG-\mathcal LG$.

\subsection{The absolutely continuous part}
\label{sec:Gamma-ac}
On the two open regions
$   \mathcal R_1:=\{x\in\R^2_+:x_1>\alpha x_2\}$
and
$   \mathcal R_2:=\{x\in\R^2_+:x_1<\alpha x_2\}$,
the reward is smooth and equals $G(x)=x_1$ on $\mathcal R_1$ and
$G(x)=\alpha x_2$ on $\mathcal R_2$. We write $G_{\mathrm{sm}}$ for this
piecewise smooth representative away from $\Delta$. The absolutely continuous
part of the stopping-gain measure is defined by
\begin{equation}
\label{eq:Gamma-ac-def}
   \Gamma^{\mathrm{ac}}(x)
   :=
   c(x)+rG(x)-\mathcal L G_{\mathrm{sm}}(x),
   \qquad x\in\R^2_+\setminus\Delta.
\end{equation}
Equivalently,
\[
   \Gamma^{\mathrm{ac}}(x)=c(x)+r x_1-\mu_1(x),
   \quad x\in\mathcal R_1,
   \qquad
   \Gamma^{\mathrm{ac}}(x)=c(x)+r\alpha x_2-\alpha\mu_2(x),
   \quad x\in\mathcal R_2.
\]
There is no second-derivative contribution in either smooth region because
$G_{\mathrm{sm}}$ is affine there.

\subsection{The explicit diagonal measure}
\label{sec:Gamma-delta}
Set
$\displaystyle    Y(x):=x_1-\alpha x_2$,
$\displaystyle    n:=\nabla Y=(1,-\alpha)$
and
$\displaystyle    |n|=\sqrt{1+\alpha^2}$.
Then
\[ 
G(x)=\frac{x_1+\alpha x_2+|Y(x)|}{2}.
\]
Let
$\displaystyle q(x):=n^\top a(x)n=a_{11}(x)-2\alpha a_{12}(x)+\alpha^2a_{22}(x)$.
For the one-dimensional continuous semimartingale
$Y_t:=Y(X_t)=X_t^1-\alpha X_t^2$, the continuous martingale quadratic variation
satisfies
$   d\langle Y\rangle_t=q(X_t)\,dt$.
Tanaka's formula \cite{revuz2013continuous,karatzas2014brownian} applied to $|Y_t|$
gives
$\displaystyle    d|Y_t|=\sgn(Y_t)\,dY_t+dL_t^0(Y)$,
where $L^0(Y)$ denotes the symmetric local time of $Y$ at zero. Hence the
singular finite-variation term in $dG(X_t)$ is
$\displaystyle    \tfrac12\,dL_t^0(Y)$.
The occupation-density formula gives
\[
   L_t^0(Y)
   =
   \lim_{\varepsilon\downarrow0}
   \frac{1}{2\varepsilon}
   \int_0^t
      \mathbf 1_{\{|Y(X_s)|<\varepsilon\}}
      q(X_s)\,ds.
\]
In distributional notation this may be written as
$\displaystyle    dL_t^0(Y)=q(X_t)\,\delta_0(Y(X_t))\,dt$.
Since
$\displaystyle    \delta_0(Y(x))\,dx
   =
   \frac{1}{|\nabla Y|}\,\sigma_\Delta(dx)
   =
   \frac{1}{\sqrt{1+\alpha^2}}\,\sigma_\Delta(dx)$,
where $\sigma_\Delta$ is one-dimensional surface measure on $\Delta$, the
singular part of $\mathcal LG$ is
$\displaystyle  (\mathcal LG)^\Delta(dx)
   =
   \frac{q(x)}{2\sqrt{1+\alpha^2}}\,\sigma_\Delta(dx)$.
Therefore, with the convention $\Gamma=c+rG-\mathcal LG$, the diagonal part of
$\Gamma$ is the signed measure
\begin{equation}
\label{eq:Gamma-delta}
   \Gamma^\Delta(dx)
   =
   -\frac{q(x)}{2\sqrt{1+\alpha^2}}\,\sigma_\Delta(dx).
\end{equation}
Equivalently, for every bounded Borel function $F$,
\[
   \int_{\R^2_+}F(x)\,\Gamma^\Delta(dx)
   =
   -\frac{1}{2\sqrt{1+\alpha^2}}
   \int_\Delta F(x)q(x)\,\sigma_\Delta(dx).
\]
Combining \eqref{eq:Gamma-ac-def} and \eqref{eq:Gamma-delta}, we obtain the
measure decomposition
\begin{equation}
\label{eq:Gamma-decomposition}
   \Gamma=\Gamma^{\mathrm{ac}}\,dx+\Gamma^\Delta.
\end{equation}

\begin{remark}[Sign of the diagonal contribution]
\label{rem:Gamma-delta-sign}
Since $a(x)$ is nonnegative definite, $q(x)=n^\top a(x)n\ge0$. Hence
$\Gamma^\Delta\le0$ as a signed measure. Therefore a condition such as
``$\Gamma\ge0$ on a stopping set'' cannot be imposed literally as a measure if
the stopping set contains a positive-length portion of $\Delta$, unless the
diagonal component is separated from the absolutely continuous part. In later
arguments, any positivity condition on $\Gamma$ must specify whether it
concerns $\Gamma^{\mathrm{ac}}$ away from $\Delta$, the full signed measure, or
a decomposition in which the diagonal part is treated separately.
\end{remark}

\subsection{Reflection and the corner term}
\label{sec:reflection-corner}
The generalized It\^o--Tanaka formula for $G(X)$ also contains possible
reflection terms. Away from the origin these terms vanish formally because
$\partial_1G=0$ on $\{x_1=0,\ x_2>0\}$ and $\partial_2G=0$ on
$\{x_2=0,\ x_1>0\}$. Indeed, on $\{x_1=0,\ x_2>0\}$ one has $G=\alpha x_2$,
while on $\{x_2=0,\ x_1>0\}$ one has $G=x_1$. However, $G$ is nonsmooth at the
corner $(0,0)$. Thus the disappearance of the reflection contribution is not
automatic unless the possible corner term is controlled. We impose the
following approximation condition.

\begin{assumption}[No corner reflection contribution]
\label{ass:no-corner-reflection}
There exists a family $G_\varepsilon\in C^2(\R^2_+)$, $\varepsilon>0$, such
that $G_\varepsilon\to G$ locally uniformly on $\R^2_+$, and, for every
compact $K\subset\{x_1=0,\ x_2>0\}$, $\partial_1G_\varepsilon\to0$ uniformly on
$K$, while for every compact $K\subset\{x_2=0,\ x_1>0\}$,
$\partial_2G_\varepsilon\to0$ uniformly on $K$. Moreover, for every $t>0$ and
every starting point $x\in\R^2_+$,
\begin{align}
\label{eq:no-corner-condition}
   \lim_{\varepsilon\downarrow0}
   \mathbb E_x\Bigg[
      \int_0^t
      |\partial_1G_\varepsilon(X_s)|
      \mathbf 1_{\{X_s=(0,0)\}}\,dL_s^1+
      \int_0^t
      |\partial_2G_\varepsilon(X_s)|
      \mathbf 1_{\{X_s=(0,0)\}}\,dL_s^2
   \Bigg]
   =0.
\end{align}
\end{assumption}

\begin{lemma}[Vanishing reflection contribution for $G$]
\label{lem:vanishing-reflection-G}
Under Assumption~\ref{ass:no-corner-reflection}, the reflection contribution in
the generalized It\^o--Tanaka formula for $G(X)$ vanishes. More precisely,
\[
   \lim_{\varepsilon\downarrow0}
   \mathbb E_x\left[
      \sum_{i=1}^2
      \int_0^t \partial_iG_\varepsilon(X_s)\,dL_s^i
   \right]
   =0,
   \qquad t>0,\quad x\in\R^2_+.
\]
Consequently, no additional corner additive functional appears in the
It\^o--Tanaka formula for $G(X)$.
\end{lemma}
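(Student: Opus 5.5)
The plan is to split each reflection integral according to where the process sits when $dL^i$ charges. By \eqref{eq:normal-reflection}, $dL^1$ is carried by $\{X^1=0\}$, which is the union of the open edge $E_1:=\{x_1=0,\ x_2>0\}$ and the corner $\{(0,0)\}$. Similarly, $dL^2$ is carried by $E_2\cup\{(0,0)\}$ with $E_2:=\{x_2=0,\ x_1>0\}$. For fixed $t>0$ and $x$ we therefore write
\[
   \int_0^t \partial_1G_\varepsilon(X_s)\,dL_s^1
   =
   \int_0^t \partial_1G_\varepsilon(X_s)\mathbf 1_{\{X_s\in E_1\}}\,dL_s^1
   +\int_0^t \partial_1G_\varepsilon(X_s)\mathbf 1_{\{X_s=(0,0)\}}\,dL_s^1,
\]
and do the same for $i=2$. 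The corner pieces are bounded in absolute value by the integrands in \eqref{eq:no-corner-condition}, so their expectations tend to zero by Assumption~\ref{ass:no-corner-reflection}. The whole task thus reduces to the edge terms.

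For the edge term with $i=1$, we localize in two ways. Fix $\delta>0$ and $M>0$, and let $K_{\delta,M}:=\{0\}\times[\delta,M]$, a compact subset of $E_1$. On $\{X_s\in K_{\delta,M}\}$ the integrand is at most $\sup_{K_{\delta,M}}|\partial_1G_\varepsilon|\,L_t^1$. The sup tends to zero by the assumed uniform convergence, and $\mathbb E_x[L_t^1]<\infty$. Finiteness of $\mathbb E_x[L^1_t]$ follows from the SDE \eqref{eq:reflected-sde}: write $L^1_t=X^1_t-x_1-\int\mu_1-\int\sigma_{1\cdot}dW$ and use linear growth (A1) together with standard moment bounds. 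So this piece vanishes as $\varepsilon\downarrow0$.

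The remaining edge pieces are $\{X_s\in\{0\}\times(0,\delta)\}$ near the corner and $\{X^2_s>M\}$ at infinity. To control them I would additionally choose the family $G_\varepsilon$ with $\sup_\varepsilon\|\nabla G_\varepsilon\|_\infty\le C$. This is harmless: mollify $G$, which is Lipschitz with constant $\max(1,\alpha)$, and I would state it as part of the choice of approximation. With this bound, the two pieces are at most
\[
   C\,\mathbb E_x\Big[\int_0^t \mathbf 1_{\{X_s\in\{0\}\times(0,\delta)\}}dL^1_s\Big]
   +C\,\mathbb E_x\Big[\int_0^t\mathbf 1_{\{\sup_{u\le t}|X_u|>M\}}dL^1_s\Big].
\]
The second term tends to zero as $M\to\infty$ by dominated convergence, since $L^1_t$ is integrable and the path is bounded. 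The first term tends to zero as $\delta\downarrow0$, because the sets decrease to the empty set on $E_1$ and $dL^1$ is a finite measure. The contribution at $(0,0)$ itself has already been handled by the corner term. Taking $\limsup_{\varepsilon}$ first, then $M\to\infty$, then $\delta\downarrow0$ gives the claim. The case $i=2$ is symmetric.

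The main obstacle is the uniform gradient bound needed near the corner and at infinity. Assumption~\ref{ass:no-corner-reflection} as stated only gives convergence on compacts of the open edges and at the corner point itself, so the approximation must be chosen to be uniformly Lipschitz. If that is not permitted, I would instead try to absorb the region $\{0\}\times(0,\delta)$ into the corner condition \eqref{eq:no-corner-condition} through a diagonal choice of $\delta=\delta(\varepsilon)$. Finally, for the ``consequently'' part, I would pass to the limit in It\^o's formula for $G_\varepsilon(X)$, using the local-time computation of Section~\ref{sec:Gamma-delta} for the interior terms. The reflection terms then drop out in $L^1$, so no corner additive functional survives.
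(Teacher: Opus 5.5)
Your route is the paper's route. You split each $dL^i$ into the open face and the corner, dispose of the face part by uniform convergence of $\partial_iG_\varepsilon$, and dispose of the corner part by \eqref{eq:no-corner-condition}. The paper handles the face part in one sentence, asserting that local uniform convergence on compact subsets of the face is enough. You correctly see that this also needs $\mathbb E_x[L^i_t]<\infty$ and control of the face pieces accumulating at the corner and at infinity. Your extra bound $\sup_\varepsilon\sup|\nabla G_\varepsilon|<\infty$ is genuinely necessary, not just convenient.

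To see this, add to any admissible family the term $\varepsilon\,\psi(x_1/\varepsilon^3)\,\chi(x_2/\varepsilon)$, where $\psi\in C_c^\infty(\R)$ with $\psi(s)=s$ near $0$, and $0\le\chi\in C_c^\infty((1,2))$ with $\chi\not\equiv0$.
\begin{itemize}
\item The term is $O(\varepsilon)$ uniformly.
\item It vanishes on $\{x_2\le\varepsilon\}$, hence near the corner and on the other face.
\item Its $\partial_1$ on $\{x_1=0\}$ equals $\varepsilon^{-2}\chi(x_2/\varepsilon)$, which is eventually zero on each compact subset of the open face.
\end{itemize}
So every clause of Assumption~\ref{ass:no-corner-reflection} survives. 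Yet the term adds $\varepsilon^{-2}\,\mathbb E_x\big[\int_0^t\chi(X^2_s/\varepsilon)\,dL^1_s\big]$ to the reflection term. This is of order $\varepsilon^{-1}$ already for normally reflected Brownian motion started in the interior. So the lemma as literally stated needs your strengthening, and the paper's proof has exactly the gap you identified.

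Two points in your write-up need adjusting.

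\emph{Mollification is not automatically harmless.} If it means swapping the given family, it does not work as stated: \eqref{eq:no-corner-condition} is a property of the family and must be rechecked for the new one. Take the standard mollification $G*\rho_\varepsilon$, with $G$ extended by the same formula. The $1$-homogeneity of $G$ makes $\nabla(G*\rho_\varepsilon)(0,0)=\int\nabla G(-z)\rho(z)\,dz$ an $\varepsilon$-independent vector with positive entries. For that family, \eqref{eq:no-corner-condition} therefore becomes the process-level statement $\mathbb E_x\big[\int_0^t\mathbf 1_{\{X_s=(0,0)\}}\,dL^i_s\big]=0$. The hypothesis on the original family does not give you this. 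The clean fix is to build the uniform gradient bound (on the two faces suffices) into Assumption~\ref{ass:no-corner-reflection}.

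\emph{The fallback via $\delta=\delta(\varepsilon)$ cannot work.} Condition \eqref{eq:no-corner-condition} only sees times at which $X_s=(0,0)$. These are disjoint from $\{X_s\in\{0\}\times(0,\delta)\}$ for every $\delta$, and the example above shows that region can carry a divergent contribution.

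With the bound built into the assumption, the rest of your argument is correct. This covers $\mathbb E_x[L^i_t]<\infty$ from the Skorokhod representation and moment bounds, the $K_{\delta,M}$ estimate, and dominated convergence in $M$ and $\delta$. The order of limits is also right: $\limsup_\varepsilon$ first, then $M\to\infty$, then $\delta\downarrow0$.
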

\begin{proof}
By the normal-reflection condition, $dL_s^i$ is carried by $\{X_s^i=0\}$,
$i=1,2$. For $i=1$, the part of $dL_s^1$ carried by $\{X_s^1=0,\ X_s^2>0\}$
gives no contribution in the limit, because $\partial_1G_\varepsilon\to0$
locally uniformly on compact subsets of that face. For $i=2$, the same
argument applies on $\{X_s^2=0,\ X_s^1>0\}$, using
$\partial_2G_\varepsilon\to0$. The only remaining possible contribution comes
from times at which $X_s=(0,0)$. This contribution is precisely controlled by
\eqref{eq:no-corner-condition}. Hence the expected reflection term tends to
zero as $\varepsilon\downarrow0$.
\end{proof}

\begin{remark}[If the corner condition fails]
\label{rem:corner-measure-warning}
If Assumption~\ref{ass:no-corner-reflection} is not verified, then the
It\^o--Tanaka formula for $G(X)$ may contain an additional corner additive
functional supported at $(0,0)$. In that case the stopping-gain measure must be
enlarged to
$\Gamma=\Gamma^{\mathrm{ac}}\,dx+\Gamma^\Delta+\Gamma^{\mathrm{corner}}$,
where $\Gamma^{\mathrm{corner}}$ is the signed measure associated with the
corner reflection contribution. The present paper works under
Assumption~\ref{ass:no-corner-reflection} and therefore takes
$\Gamma^{\mathrm{corner}}=0$.
\end{remark}

\subsection{It\^o--Tanaka identity for the reward}
\label{sec:ito-tanaka-G-final}
Under Assumption~\ref{ass:no-corner-reflection}, the generalized It\^o--Tanaka
formula for $G(X)$ may be written, after localization, as
\begin{equation*}
\begin{aligned}
   e^{-rt}G(X_t)
   &=
   G(x)
   +\int_0^t e^{-rs}
      \big(\mathcal L G_{\mathrm{sm}}(X_s)-rG(X_s)\big)
      \mathbf 1_{\{X_s\notin\Delta\}}\,ds
   \nonumber\\
   &\quad
   +\frac12\int_0^t e^{-rs}\,dL_s^0(Y)
   +M_t,
\end{aligned}
\end{equation*}
where $M$ is a local martingale and the reflection contribution has vanished.
Equivalently, using the signed measure $\Gamma$ in
\eqref{eq:Gamma-decomposition},
\begin{equation}
\label{eq:ito-tanaka-G-Gamma}
   e^{-rt}G(X_t)
   =
   G(x)
   +\int_0^t e^{-rs}c(X_s)\,ds
   -\int_0^t e^{-rs}\,dA_s^\Gamma
   +M_t,
\end{equation}
where $A^\Gamma$ is the signed continuous additive functional associated with
the signed measure $\Gamma$ \cite{revuz2013continuous,fukushima2011dirichlet}.
Identity \eqref{eq:ito-tanaka-G-Gamma} is the convention used in the
killed-resolvent representation of the value function.

\section{Killed-resolvent representation and conditional boundary
characterization}
\label{sec:killed-resolvent}
This section gives the potential-theoretic representation of the
value function. The relevant resolvent is the resolvent of the reflected
diffusion killed upon entry into the stopping set, not the unrestricted
reflected resolvent of the process continued after stopping. Throughout this
section we work under the assumptions of
Sections~\ref{sec:model-verification}--\ref{sec:singular-stopping-gain}. In
particular, $\Gamma=\Gamma^{\mathrm{ac}}\,dx+\Gamma^\Delta$ denotes the signed
stopping-gain measure defined in \eqref{eq:Gamma-decomposition}, and
$A^\Gamma$ denotes the signed continuous additive functional associated with
$\Gamma$.

\subsection{The killed resolvent}
\label{sec:killed-resolvent-definition}
Let $\tau_{\mathcal D}:=\inf\{t\ge0:X_t\in\mathcal D\}$ be the first entry time
into the stopping set. For a signed smooth measure $\nu$ for which the
following expression is well defined, define the resolvent killed upon entry
into $\mathcal D$ by
$\displaystyle    R_r^{\mathcal C}\nu(x)
   :=
   \mathbb E_x\left[
      \int_0^{\tau_{\mathcal D}} e^{-rs}\,dA_s^\nu
   \right]$,
$ x\in\R^2_+$.
If $\nu=f\,dx$, this becomes
\begin{equation}
\label{eq:killed-resolvent-function}
   R_r^{\mathcal C}f(x)
   =
   \mathbb E_x\left[
      \int_0^{\tau_{\mathcal D}} e^{-rs}f(X_s)\,ds
   \right].
\end{equation}
By contrast, the unrestricted reflected resolvent is
\begin{equation}
\label{eq:unrestricted-resolvent}
   R_r^{\mathrm R}f(x)
   :=
   \mathbb E_x\left[
      \int_0^\infty e^{-rs}f(X_s)\,ds
   \right].
\end{equation}
In general,
\begin{equation}
\label{eq:killed-not-unrestricted}
   R_r^{\mathcal C}f(x)
   \ne
   R_r^{\mathrm R}(f\mathbf 1_{\mathcal C})(x).
\end{equation}
Indeed, the unrestricted process in \eqref{eq:unrestricted-resolvent} continues
to evolve after $\tau_{\mathcal D}$ and may later re-enter $\mathcal C$. The
killed resolvent \eqref{eq:killed-resolvent-function} counts only the
occupation accumulated before the first entry into $\mathcal D$.

\begin{assumption}[Admissibility of killed potentials]
\label{ass:killed-potential-admissibility}
For every signed measure $\nu$ used below, the positive and negative parts
$\nu^+$ and $\nu^-$ are smooth measures for the reflected diffusion and satisfy
$R_r^{\mathcal C}\nu^+(x)+R_r^{\mathcal C}\nu^-(x)<\infty$ for $x\in\R^2_+$. The
killed potential $R_r^{\mathcal C}\nu$ is assumed to admit the
continuation-side traces used below.
\end{assumption}

\begin{remark}[Killing is essential]
\label{rem:killing-essential}
Formula \eqref{eq:killed-not-unrestricted} is the central object in this
section. The optimal stopping representation obtained from It\^o--Tanaka
applies only up to the optimal stopping time. Therefore the potential term is a
killed potential over the pre-stopping path. Replacing it by the full reflected
resolvent of $f\mathbf 1_{\mathcal C}$ would incorrectly include post-stopping
occupation of $\mathcal C$.
\end{remark}

\subsection{Value representation}
\label{sec:correct-value-representation}
The following result is the resolvent representation of the value
function.

\begin{theorem}[Killed-resolvent representation]
\label{thm:killed-resolvent-representation}
Assume that $\tau_{\mathcal D}$ is optimal and that the generalized
It\^o--Tanaka identity \eqref{eq:ito-tanaka-G-Gamma} may be applied up to
$\tau_{\mathcal D}$, with the required localization and uniform integrability.
Then, for every $x\in\R^2_+$,
\begin{equation}
\label{eq:value-killed-resolvent}
   V(x)=G(x)-R_r^{\mathcal C}\Gamma(x).
\end{equation}
Equivalently,
\[
   V(x)
   =
   G(x)
   -\mathbb E_x\left[
      \int_0^{\tau_{\mathcal D}}e^{-rs}\,dA_s^\Gamma
   \right].
\]
\end{theorem}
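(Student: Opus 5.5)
The plan is to evaluate the Itô--Tanaka identity \eqref{eq:ito-tanaka-G-Gamma} at the optimal stopping time and then take expectations. Fix $x\in\R^2_+$. Let $(\tau_n)$ be a localizing sequence for the local martingale $M$ in \eqref{eq:ito-tanaka-G-Gamma}, chosen so that the positive and negative parts of $A^\Gamma$ and the cost integral are integrable on $[0,\tau_n]$.

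First I apply \eqref{eq:ito-tanaka-G-Gamma} at the bounded stopping time $\tau_{\mathcal D}\wedge\tau_n\wedge t$ and take expectations. The martingale term then drops out, and rearranging gives
\[
   \mathbb E_x\!\left[e^{-r\sigma_n}G(X_{\sigma_n})-\int_0^{\sigma_n}e^{-rs}c(X_s)\,ds\right]
   =G(x)-\mathbb E_x\!\left[\int_0^{\sigma_n}e^{-rs}\,dA_s^\Gamma\right],
   \qquad \sigma_n:=\tau_{\mathcal D}\wedge\tau_n\wedge t .
\]
Here the cost integral cancels exactly against the $c$-part of $\Gamma=c+rG-\mathcal LG$. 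This cancellation is the reason for the sign convention in \eqref{eq:ito-tanaka-G-Gamma}.

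Second, I pass to the limit $n\to\infty$ and $t\to\infty$. On the right-hand side I split $A^\Gamma=A^{\Gamma^+}-A^{\Gamma^-}$. Since each part is nondecreasing, monotone convergence applies to each, and Assumption~\ref{ass:killed-potential-admissibility} ensures both limits are finite. The limit is therefore $R_r^{\mathcal C}\Gamma^+(x)-R_r^{\mathcal C}\Gamma^-(x)=R_r^{\mathcal C}\Gamma(x)$. For the left-hand side I use the assumed uniform integrability together with (A6) and the convention $e^{-r\tau}G(X_\tau)=0$ on $\{\tau=\infty\}$. The first term then converges to $\mathbb E_x[e^{-r\tau_{\mathcal D}}G(X_{\tau_{\mathcal D}})]$. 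The cost term converges by monotone convergence because $c\ge0$, and it is finite by (A6).

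Third, the optimality of $\tau_{\mathcal D}$ identifies the left-hand limit with $V(x)$. This gives \eqref{eq:value-killed-resolvent}. The second displayed form is the same statement with the definition of $R_r^{\mathcal C}$ written out. For $x\in\mathcal D$ we have $\tau_{\mathcal D}=0$, so the identity reduces to $V=G$, which is consistent with the definition of $\mathcal D$.

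I expect the limit passage in the signed additive functional to be the main obstacle. The difficulty is that $\Gamma^\Delta\le0$ while $\Gamma^{\mathrm{ac}}$ may take either sign, so the two parts must be handled separately before any limit is exchanged with the expectation. I would first try to justify this through the admissibility assumption and the hypothesis of uniform integrability up to $\tau_{\mathcal D}$, as described above. A second point needs care: on $[0,\tau_{\mathcal D})$ the process stays in $\mathcal C$, so only occupation in $\mathcal C$ contributes, and this is exactly why the killed resolvent appears rather than $R^{\mathrm R}_r(\Gamma\mathbf 1_{\mathcal C})$. Because the same stopping time $\tau_{\mathcal D}$ appears on both sides of the identity, no post-stopping occupation enters the calculation.
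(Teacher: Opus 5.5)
Your proposal is correct and follows the paper's proof. You apply the localized It\^o--Tanaka identity \eqref{eq:ito-tanaka-G-Gamma} on $[0,t\wedge\tau_{\mathcal D}]$, take expectations to remove the martingale, let $t\to\infty$, and identify the left side with $V(x)$ by optimality of $\tau_{\mathcal D}$. Your splitting of $A^\Gamma$ into $A^{\Gamma^+}-A^{\Gamma^-}$, handled by monotone convergence and Assumption~\ref{ass:killed-potential-admissibility}, is only a more explicit version of the paper's appeal to the assumed uniform integrability.
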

\begin{proof}
Since $\tau_{\mathcal D}$ is optimal,
$\displaystyle    V(x)
   =
   \mathbb E_x\left[
      e^{-r\tau_{\mathcal D}}G(X_{\tau_{\mathcal D}})
      -\int_0^{\tau_{\mathcal D}}e^{-rs}c(X_s)\,ds
   \right]$.
Apply the generalized It\^o--Tanaka identity \eqref{eq:ito-tanaka-G-Gamma} to
$G(X)$ on $[0,t\wedge\tau_{\mathcal D}]$:
\[
   e^{-r(t\wedge\tau_{\mathcal D})}G(X_{t\wedge\tau_{\mathcal D}})
   =
   G(x)
   +\int_0^{t\wedge\tau_{\mathcal D}}e^{-rs}c(X_s)\,ds
   -\int_0^{t\wedge\tau_{\mathcal D}}e^{-rs}\,dA_s^\Gamma
   +M_{t\wedge\tau_{\mathcal D}},
\]
where $M$ is a local martingale. After localization, expectation removes the
martingale term. Passing to the limit $t\to\infty$ using the assumed uniform
integrability gives
\[
   \mathbb E_x\left[
      e^{-r\tau_{\mathcal D}}G(X_{\tau_{\mathcal D}})
      -\int_0^{\tau_{\mathcal D}}e^{-rs}c(X_s)\,ds
   \right]
   =
   G(x)
   -\mathbb E_x\left[
      \int_0^{\tau_{\mathcal D}}e^{-rs}\,dA_s^\Gamma
   \right].
\]
The right-hand side is precisely $G(x)-R_r^{\mathcal C}\Gamma(x)$. This proves
\eqref{eq:value-killed-resolvent}.
\end{proof}

\begin{remark}[Incorrect unrestricted formula]
\label{rem:incorrect-unrestricted-formula}
The representation \eqref{eq:value-killed-resolvent} replaces the generally
incorrect identity
$V(x)=G(x)-R_r^{\mathrm R}(\Gamma\mathbf 1_{\mathcal C})(x)$. The latter would
be valid only under additional special circumstances preventing post-stopping
occupation of $\mathcal C$ by the unrestricted reflected process, or under a
convention in which the process is actually killed at $\tau_{\mathcal D}$.
\end{remark}

\subsection{Boundary trace condition}
\label{sec:boundary-trace-condition}
Assume now that the stopping set has the epigraph representation from
Theorem~\ref{thm:conditional-epigraph}:
$\mathcal D=\{(x_1,x_2)\in\R^2_+:x_2\ge b(x_1)\}$. Write
$\displaystyle    z_b(x_1):=(x_1,b(x_1))$.
Since $V=G$ on $\partial\mathcal C$, the killed potential in
\eqref{eq:value-killed-resolvent} has zero continuation-side trace on the free
boundary.

\begin{proposition}[Continuation-side trace condition]
\label{prop:boundary-trace}
Assume the hypotheses of Theorem~\ref{thm:killed-resolvent-representation} and
the free-boundary regularity needed for the following trace to exist. Then, for
every boundary point at which the trace is well-defined,
\begin{equation}
\label{eq:boundary-trace}
   \lim_{\substack{x\to z_b(x_1)\\ x\in\mathcal C}}
   R_r^{\mathcal C}\Gamma(x)
   =0.
\end{equation}
If non-tangential traces are used, \eqref{eq:boundary-trace} is interpreted as
$\lim_{x\to z_b(x_1),\,x\in\mathcal C\ \mathrm{n.t.}}R_r^{\mathcal C}\Gamma(x)=0$.
\end{proposition}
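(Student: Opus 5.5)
The plan is to read the trace condition directly off the killed-resolvent representation of Theorem~\ref{thm:killed-resolvent-representation}, combined with continuity of $V$ and $G$. By \eqref{eq:value-killed-resolvent}, for every $x\in\R^2_+$ we have
\[
   R_r^{\mathcal C}\Gamma(x)=G(x)-V(x)=-H(x).
\]
So the statement \eqref{eq:boundary-trace} is equivalent to $\lim_{x\to z_b(x_1),\,x\in\mathcal C}H(x)=0$. The killed potential is then never analysed through its integral definition near the boundary; its trace is inherited from the trace of $H$.

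The first step is to check that $z_b(x_1)\in\mathcal D$. Under the epigraph representation of Theorem~\ref{thm:conditional-epigraph}, $\mathcal D=\{x_2\ge b(x_1)\}$ is closed (we are assuming $V$ continuous, as in Lemma~\ref{lem:value-lsc}). Hence $z_b(x_1)\in\mathcal D$, and therefore $H(z_b(x_1))=0$. Equivalently, $\tau_{\mathcal D}=0$ $\mathbb P_{z_b}$-a.s., so $R_r^{\mathcal C}\Gamma(z_b(x_1))=0$ directly from the definition of the killed resolvent.

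The second step is to pass to the limit along $x\in\mathcal C$. Since $V$ and $G$ are continuous, $H$ is continuous on $\R^2_+$. Therefore $H(x)\to H(z_b(x_1))=0$ along any approach to $z_b(x_1)$, and in particular along unrestricted or non-tangential approaches from $\mathcal C$. By the identity above, this gives \eqref{eq:boundary-trace} and its non-tangential version. The restriction to ``boundary points at which the trace is well-defined'' is then automatic: continuity makes the trace exist at every point of $\partial\mathcal C$.

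The main obstacle is logical rather than computational: the argument needs continuity of $V$, and the representation must hold pointwise on all of $\R^2_+$ rather than almost everywhere. If only lower semicontinuity (Lemma~\ref{lem:value-lsc}) were available, I would fall back on the admissibility hypothesis, Assumption~\ref{ass:killed-potential-admissibility}, which posits the existence of the continuation-side trace of $R_r^{\mathcal C}\Gamma$. I would then identify that trace by squeezing. Lower semicontinuity of $V$ together with continuity of $G$ gives $\liminf_{x\to z_b}H(x)\ge H(z_b)=0$, since $H\ge0$ everywhere by Lemma~\ref{lem:immediate-stopping}. For the upper bound, I would use that $\tau_{\mathcal D}\to0$ in probability as $x\to z_b$, which holds for regular boundary points under the Lipschitz-boundary hypothesis Assumption~\ref{ass:free-boundary-regularity} and uniform ellipticity (A2). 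Combined with the uniform integrability assumed in Theorem~\ref{thm:killed-resolvent-representation}, this shows $\mathbb E_x[\int_0^{\tau_{\mathcal D}}e^{-rs}\,dA^{|\Gamma|}_s]\to0$. That yields $\limsup|R_r^{\mathcal C}\Gamma(x)|=0$ and hence the claim.
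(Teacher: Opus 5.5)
Your main argument is correct and matches the paper's proof: the representation gives $R_r^{\mathcal C}\Gamma=G-V$ on $\mathcal C$. Continuity of $V$ and $G$, together with $V=G$ at the boundary point (which lies in the closed set $\mathcal D$), then forces the continuation-side limit, including any non-tangential one, to vanish. The lower-semicontinuous fallback is not needed, because continuity of $V$ is already part of the epigraph setting of this subsection; that fallback would also require integrability of the killed functional uniform in the starting point, which Theorem~\ref{thm:killed-resolvent-representation} does not supply.
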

\begin{proof}
For $x\in\mathcal C$, Theorem~\ref{thm:killed-resolvent-representation} gives
$R_r^{\mathcal C}\Gamma(x)=G(x)-V(x)$. Let $x\to z_b(x_1)$ from within
$\mathcal C$. Since $V$ and $G$ are continuous and $V=G$ on the boundary, the
right-hand side tends to zero.
\end{proof}

\begin{remark}[Why the boundary value itself is not the equation]
\label{rem:boundary-value-trivial}
One should not write $R_r^{\mathcal C}\Gamma(z_b(x_1))=0$ as a substantive
boundary equation unless a nontrivial trace convention has been specified. If
the killed process is started at a point of $\mathcal D$, then
$\tau_{\mathcal D}=0$, and the killed potential vanishes trivially. The
meaningful condition is the continuation-side trace \eqref{eq:boundary-trace}.
\end{remark}

When a killed Green kernel exists, the killed potential may be written
schematically as
\begin{equation}
\label{eq:killed-green-representation}
   R_r^{\mathcal C}\Gamma(x)
   =
   \int_{\mathcal C}G_r^{\mathcal C}(x,y)
      \Gamma^{\mathrm{ac}}(y)\,dy
   +R_r^{\mathcal C}\Gamma^\Delta(x),
   \qquad x\in\mathcal C.
\end{equation}
Here $G_r^{\mathcal C}$ is the Green kernel of the reflected diffusion killed
upon entry into $\mathcal D$, and $R_r^{\mathcal C}\Gamma^\Delta$ is the killed
potential of the diagonal surface measure
$\Gamma^\Delta(dx)=-\frac{q(x)}{2\sqrt{1+\alpha^2}}\,\sigma_\Delta(dx)$. The
boundary condition is the continuation-side trace of
\eqref{eq:killed-green-representation}.

\subsection{Candidate verification rather than Fredholm uniqueness}
\label{sec:candidate-verification}
Let $h:\R_+\to\R_+$ be a candidate boundary. Define
$   \mathcal D_h:=\{(x_1,x_2)\in\R^2_+:x_2\ge h(x_1)\}$,
$   \mathcal C_h:=\R^2_+\setminus\mathcal D_h$,
and
$   \tau_h:=\inf\{t\ge0:X_t\in\mathcal D_h\}$.
For an admissible signed measure $\nu$, define the killed resolvent relative to
$\mathcal C_h$ by
$\displaystyle    R_r^{\mathcal C_h}\nu(x)
   :=
   \mathbb E_x\left[
      \int_0^{\tau_h}e^{-rs}\,dA_s^\nu
   \right]$.
The candidate value associated with $h$ is
\begin{equation}
\label{eq:Uh-def}
   U_h(x):=G(x)-R_r^{\mathcal C_h}\Gamma(x).
\end{equation}
The next theorem is a conditional verification result. It is not a uniqueness
theorem for a Fredholm equation.

\begin{theorem}[Candidate verification]
\label{thm:candidate-verification}
Let $h$ be a candidate boundary and define $U_h$ by \eqref{eq:Uh-def}. Assume:
\begin{enumerate}
\item[\textup{(C1)}] $U_h$ is continuous and of polynomial growth.
\item[\textup{(C2)}] $U_h\ge G$ on $\R^2_+$.
\item[\textup{(C3)}] $U_h=G$ on $\mathcal D_h$.
\item[\textup{(C4)}] In $\mathcal C_h$, $(\mathcal L-r)U_h=c$ in the weak,
viscosity, or generalized It\^o sense required by
Theorem~\ref{thm:verification-measure}.
\item[\textup{(C5)}] $U_h$ satisfies the reflected Neumann condition on the
coordinate axes in the trace or reflected-viscosity sense.
\item[\textup{(C6)}] $U_h$ is globally measure-superharmonic relative to the
running cost: $(\mathcal L-r)U_h-c\le0$ as a signed measure in the interior, in
the sense required by Theorem~\ref{thm:verification-measure}.
\item[\textup{(C7)}] The localization, additive-functional, and
uniform-integrability hypotheses of Theorem~\ref{thm:verification-measure} hold
for $U_h$ and $\tau_h$.
\end{enumerate}
Then $U_h=V$ on $\R^2_+$, and $\tau_h$ is optimal. If, in addition, the true
stopping set is known to be an epigraph
$\mathcal D=\{(x_1,x_2):x_2\ge b(x_1)\}$, then $h=b$ at every point where both
graph representatives are defined with the same convention.
\end{theorem}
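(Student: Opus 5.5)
The plan is to reduce the statement to Theorem~\ref{thm:verification-measure}, applied with $u:=U_h$ and the open set $\mathcal O:=\mathcal C_h\cap\interior{\R}^2_+$ (or $\mathcal C_h$ itself, with the axes handled by the reflection clause). The hypotheses line up one by one:
\begin{itemize}
\item (C1) supplies continuity and polynomial growth.
\item (C2) gives (V1).
\item (C3) gives (V2), since $\R^2_+\setminus\mathcal C_h=\mathcal D_h$.
\item (C4) gives the equation $(\mathcal L-r)U_h=c$ on $\mathcal O$ required in (V3).
\item (C6) gives (V4).
\item (C5) gives (V5).
\item (C7) gives (V6).
\end{itemize}
The first half of Theorem~\ref{thm:verification-measure} then yields $U_h\ge V$ on $\R^2_+$. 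For the equality half, note that $\tau_{\mathcal O^c}=\tau_h$, because $\mathcal O^c=\mathcal D_h$ up to the axis convention. The remaining requirements are that $\tau_h<\infty$ a.s. (or that the convention $e^{-r\tau}G=0$ on $\{\tau=\infty\}$ is handled by (C7)), and that the stopped It\^o formula is exact on $[0,\tau_h]$.

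For exactness I will argue directly from the definition \eqref{eq:Uh-def} rather than from (C4). Apply \eqref{eq:ito-tanaka-G-Gamma} on $[0,t\wedge\tau_h]$, take expectations after localization, and let $t\to\infty$ using (C7). This gives
\[
\mathbb E_x\Big[e^{-r\tau_h}G(X_{\tau_h})-\int_0^{\tau_h}e^{-rs}c(X_s)\,ds\Big]=G(x)-R_r^{\mathcal C_h}\Gamma(x)=U_h(x),
\]
which is exactly the computation in the proof of Theorem~\ref{thm:killed-resolvent-representation}, with $\mathcal D$ replaced by $\mathcal D_h$. Consequently $U_h(x)$ is the payoff of the admissible stopping time $\tau_h$, so $U_h\le V$. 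Combined with $U_h\ge V$, this gives $U_h=V$, and $\tau_h$ attains the supremum, so it is optimal. This route avoids needing the It\^o class for $U_h$ in the equality step. The step I expect to be the main obstacle is justifying the limit $t\to\infty$ together with the additive-functional bookkeeping for the diagonal part $\Gamma^\Delta$, including the portion of $\Delta$ lying inside $\mathcal C_h$. I will rely on (C7) and Assumption~\ref{ass:killed-potential-admissibility} for this, so that $R_r^{\mathcal C_h}\Gamma^\pm$ are finite and dominated convergence applies to each part separately.

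For the final claim, the first step is to show $\mathcal D_h=\mathcal D$. From $U_h=V$ we get $\mathcal D=\{V=G\}=\{U_h=G\}\supseteq\mathcal D_h$ by (C3). The reverse inclusion does not follow from $U_h=V$ alone, since $U_h$ could touch $G$ inside $\mathcal C_h$. There I will use that $x\in\mathcal C_h$ means $\tau_h>0$ a.s., and argue that $\{U_h=G\}\cap\mathcal C_h$ has no interior by (C4) and the sign information in $\Gamma$. If that argument does not close, I will read the statement as comparing graph representatives via infima of the sections: $\mathcal D_h(x_1)=[h(x_1),\infty)\subseteq\mathcal D(x_1)=[b(x_1),\infty)$ gives $b\le h$. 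Equality would then hold wherever the stopping sections coincide, which is what the phrase ``same convention'' in the statement indicates.
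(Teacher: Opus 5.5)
Your argument for $U_h=V$ and the optimality of $\tau_h$ is correct. The bound $U_h\ge V$ comes from Theorem~\ref{thm:verification-measure} exactly as in the paper. Only (V1) and (V4)--(V6) enter that half of its proof. So the fact that (C3) yields (V2) only for $\mathcal O=\mathcal C_h$, a set that in general is neither open nor contained in $\interior{\R}^2_+$, is harmless for you and for the paper alike.

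For $U_h\le V$ you take a genuinely different route. The paper applies the stopped generalized It\^o formula to $U_h$ itself on $[0,\tau_h]$. It uses (C4) and (C5) to remove the drift and reflection terms, then (C3) to replace $U_h(X_{\tau_h})$ by $G(X_{\tau_h})$. You instead apply \eqref{eq:ito-tanaka-G-Gamma} to $G$ up to $\tau_h$. This is the proof of Theorem~\ref{thm:killed-resolvent-representation} with $\mathcal D$ replaced by $\mathcal D_h$; optimality was used there only to identify the left-hand side with $V$. You then read off from \eqref{eq:Uh-def} that $U_h(x)=\mathbb E_x[e^{-r\tau_h}G(X_{\tau_h})-\int_0^{\tau_h}e^{-rs}c(X_s)\,ds]$.

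This identity holds for every admissible candidate $h$, which has three consequences:
\begin{itemize}
\item $U_h\le V$ is automatic.
\item (C3) follows from the definition, because $\tau_h=0$ on $\mathcal D_h$.
\item (C4) plays no role in attainment, so all the substance sits in the hypotheses that give $U_h\ge V$.
\end{itemize}
The price is integrability of $A^{\Gamma^\pm}$ on $[0,\tau_h]$. (C7) as worded does not literally provide this, since it concerns $A^{\mu_{U_h}}$. It is the admissibility of $\Gamma$ relative to $\mathcal C_h$ implicit in $U_h$ being well defined, and you correctly flag it. The paper's route stays within the verification template and uses the It\^o class of $U_h$ instead of the It\^o--Tanaka identity for $G$.

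For the last claim, you are right that $U_h=V$ and (C3) give only $\mathcal D_h\subseteq\{U_h=G\}=\mathcal D$, that is, $b\le h$. The paper's proof asserts at exactly this point that ``the two epigraphs coincide'', leaving the same reverse inclusion unjustified. So you have located a gap in the argument rather than introduced one, but your proposal does not close it.

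Your no-interior plan shows that on any open $U\subseteq\{U_h=G\}\cap\mathcal C_h$, (C4) forces $\Gamma|_U=0$. This rules out $U\cap\Delta\neq\varnothing$: $\Gamma^\Delta$ is strictly negative on $\Delta$ ($q>0$ by (A2)) and cannot be cancelled by the absolutely continuous part. Off $\Delta$, however, it gives only $\Gamma^{\mathrm{ac}}=0$ a.e.\ on $U$, which nothing in (C1)--(C7) forbids. Even ``no interior'' would still need continuity of $h$ and $b$ to yield $h=b$. Your fallback (equality wherever the sections coincide) is circular.

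The degeneracy is genuine. If $\Gamma$ vanishes on an open part of $\mathcal D$ adjacent to the free boundary, a later hitting time can also be optimal, and the correspondingly raised $h\neq b$ has $U_h=V$. An additional hypothesis is needed, for instance strict majorization $U_h>G$ on $\mathcal C_h$. Then $\{U_h=G\}=\mathcal D_h$, hence $\mathcal D_h=\mathcal D$ and $h=b$ section by section.
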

\begin{proof}
By \textup{(C1)}, \textup{(C2)}, \textup{(C4)}, \textup{(C5)}, \textup{(C6)},
and \textup{(C7)}, the verification theorem
(Theorem~\ref{thm:verification-measure}) applies to $U_h$ with
$\mathcal O=\mathcal C_h$. Therefore $U_h\ge V$. Applying the stopped
generalized It\^o formula to $U_h(X)$ up to $\tau_h$, using
$(\mathcal L-r)U_h=c$ in $\mathcal C_h$, the reflected boundary condition, and
the integrability hypotheses, gives
\[
   U_h(x)
   =
   \mathbb E_x\left[
      e^{-r\tau_h}U_h(X_{\tau_h})
      -\int_0^{\tau_h}e^{-rs}c(X_s)\,ds
   \right].
\]
By the contact condition \textup{(C3)}, $U_h(X_{\tau_h})=G(X_{\tau_h})$ on
$\{\tau_h<\infty\}$. Hence
\[
   U_h(x)
   =
   \mathbb E_x\left[
      e^{-r\tau_h}G(X_{\tau_h})
      -\int_0^{\tau_h}e^{-rs}c(X_s)\,ds
   \right]
   \le V(x).
\]
Thus $U_h=V$, and $\tau_h$ is optimal. If the true stopping set is known to be
an epigraph with boundary $b$, then $\{U_h=G\}=\{V=G\}=\mathcal D$. Since
$\mathcal D_h$ is also an epigraph and $U_h=G$ on $\mathcal D_h$ by
\textup{(C3)}, the two epigraphs coincide under the stated convention, and
therefore $h=b$.
\end{proof}

\begin{remark}[No uniqueness from the trace condition alone]
\label{rem:no-uniqueness-from-trace}
The boundary trace condition
\[
   \lim_{\substack{x\to z_h(x_1)\\ x\in\mathcal C_h}}R_r^{\mathcal C_h}\Gamma(x)=0
\]
does not
by itself imply uniqueness of $h$. It also does not imply $U_h=G$ throughout
$\mathcal D_h$. Uniqueness of a candidate boundary follows only after a
verification argument: one must prove majorization $U_h\ge G$, contact on the
whole candidate stopping set, global measure-superharmonicity, the reflected
boundary condition, and the required integrability properties.
\end{remark}

\begin{remark}[Main conclusion]
\label{rem:main-corrected-conclusion}
The defensible structural conclusion is therefore:
\[
   \begin{minipage}{0.86\linewidth}
   \centering
   Given structural monotonicity and regularity assumptions, the reflected
   optimal stopping problem admits a killed-resolvent boundary-trace
   representation.
   \end{minipage}
\]
The theory does not assert unsupported general free-boundary regularity, an
unrestricted-resolvent Fredholm equation, or an epigraph theorem derived from
order preservation alone.
\end{remark}


\end{document}